\numberwithin{equation}{section}
\numberwithin{figure}{section}
\theoremstyle{plain}
\newtheorem{thm}{Theorem}
  \theoremstyle{definition}
  \newtheorem{defn}[thm]{Definition}
  \theoremstyle{remark}
  \newtheorem*{rem*}{Remark}
  \theoremstyle{plain}
  \newtheorem{lem}[thm]{Lemma}
  \theoremstyle{definition}
  \newtheorem*{example*}{Example}
  \theoremstyle{plain}
  \newtheorem*{thm*}{Theorem}
  \theoremstyle{plain}
  \newtheorem{cor}[thm]{Corollary}
\begin{document}
\title[Attractor continuation of non-autonomously perturbed systems]{Local attractor continuation of non-autonomously perturbed systems}

\author{Martin Kell}

\date{March 17, 2011}

\address{\noindent \tiny Max-Planck-Institute for Mathematics in the Sciences,
Inselstr. 22-26, D-04103 Leipzig, Germany}

\email{mkell@mis.mpg.de}

\thanks{The author would like to thank the IMPRS {}``Mathematics in the
Sciences'' for financial support and his advisor, Prof. J\"urgen
Jost, and the MPI MiS for providing an inspiring research atmosphere.}

\subjclass[2000]{Primary: 37B55, 37B35, 37L15; Secondary: 37H99 }

\keywords{local attractor, non-autonomous perturbation, bounded noise}
\begin{abstract}
Using Conley theory we show that local attractors remain (past) attractors
under small non-autonomous perturbations. In particular, the attractors
of the perturbed systems will have positive invariant neighborhoods
and converge upper semicontinuously to the original attractor. 

The result is split into a finite-dimensional part (locally compact)
and an infinite-dimensional part (not necessarily locally compact).
The finite-dimen\-sional part will be applicable to bounded random
noise, i.e. continuous time random dynamical systems on a locally
compact metric space which are uniformly close the unperturbed deterministic
system. The {}``closeness'' will be defined via a (simpler version
of) convergence coming from singular perturbations theory.
\end{abstract}
\maketitle

\section{Introduction}

This paper uses methods from Conley index theory to establish a continuation
of isolated attractors. For these attractors there is a stable neighborhood,
i.e. a positive invariant isolating neighborhood. Traditionally the
Conley index is applied to isolated invariant set whose isolating
neighborhoods are bounded and satisfy some compactness assumption,
Rybakowski \cite{Rybakowski1987} calls them admissible neighborhoods.
Focusing on a stable neighborhood we show that a non-autonomously
perturbed system has also a stable neighborhood which is {}``close''
to the original one.

A similar result for parabolic PDEs was obtained by Prizzi in \cite{Prizzi2005}
for small almost-periodic perturbations with sufficiently high {}``frequency''.
Because almost-periodicity causes compactness of the {}``perturbation-space''
he can obtain non-empty invariant sets even for the unstable case
(for flows on locally compact spaces our result also applies to totally
unstable invariant sets). In \cite{Ward1994} Ward obtains the result
for ODEs and small perturbations satisfying some hypothesis (H1).
Our result generalizes both of them in a way that we don't need almost-periodicity
and applies to quite general semiflows even in the infinite-dimensional
setting.

An open question to us is if an unstable invariant set with non-trivial
Conley index disappears for all small non-autonomous perturbations.
The continuation result and the Wa{\.z}ewski principle only give
us a non-empty positive invariant set. Obviously this cannot happen
in dimension $1$ and the perturbation should neither be almost-periodic
nor satisfy Ward's hypothesis (H1).

Furthermore, our result implies the upper semicontinuity of the global
pullback attractor (see \cite{Caraballo2003}) if the perturbation
is {}``uniformly small''. In addition, this also holds for local
attractors (called past attractors in \cite{Rasmussen2007}).

The result (see section \ref{sec:Inf}) is a consequence of the translation
invariance of the unperturbed flow and standard continuation results
of the Conley index. The proof is essentially contained in {\cite[Theorem 12.3]{Rybakowski1987}}
after replacing all admissibility arguments by an appropriate version
(see also \cite{Carbinatto2002}).

For locally compact metric spaces $X$ we can use ideas from \cite{Benci1991}.
Because the proof is very clear and easy to understand, we are going
to show it in section \ref{sec:Fin} even though this case, which
we call finite-dimensional case, is contained in the infinite-dimensional
case. Another reason to give the proof is that the obtained stable
neighborhoods are flow-defined and thus applicable to random dynamical
system on locally compact metric spaces without further assumptions
(see section \ref{sub:rds}). This implies that sufficiently small
bounded noise does not destroy a local attractor. We don't need regularity
of the support of the noise used by Ruelle in \cite{Ruelle1981} and
thus we could generalize his result (see also {\cite[Appendix D]{Bonatti2005}}
and the reference therein).

Furthermore, using the ideas of \cite{Kell2011c} and adjusting the
definition of semi-singular admissibility the infinite-dimensional
case can be extended to the discrete time setting. Thus stable neighborhoods
of a local attractor of a discrete time dynamical system, i.e. a continuous
maps $f:X\to X$, can be continued under small non-autonomous perturbations
$\tilde{f}:\mathbb{Z}\times X\to\mathbb{Z}\times X$.

\subsection*{Preliminaries}
\begin{defn}
[local semiflow] Let $\pi:D\to X$ be a continuous map into a topological
space $X$ and $D$ open in $\mathbb{R}^{+}\times X$ with $\{0\}\times X\subset D$.
For all $x\in X$ we define \[
\omega_{x}=\sup\{t>0\,|\,(t,x)\in D\}\in(0,\infty].\]
Assume $D\cap\mathbb{R}\times\{x\}=[0,\omega_{x})\times\{x\}$. Then
$\pi$ is called a (local) semiflow if the following holds 
\begin{itemize}
\item $x\pi0=x$ for all $x\in X$
\item if $(t,x)\in D$ and $(s,x\pi t)\in D$ then $x\pi(s+t)$ is defined
and equals $(x\pi t)\pi s$
\end{itemize}
If, in addition, $\omega_{x}=\infty$ for all $x\in X$ then $\pi$
is called a global semiflow.\end{defn}
\begin{rem*}
We use the notation $x\pi t$ for $\pi(t,p)$ and $x\phi^{p}t=\phi(t,p,x)$
whenever $(t,p,x)\in D$ (see below). Furthermore, we write $x\phi^{p}[0,t_{0}]$
for the set $\{x\phi^{p}t\,|\, t\in[0,t_{0}]\}$ under the condition
that $t_{0}<\omega_{x}^{p}$, otherwise $x\phi^{p}[0,t_{0}]$ is not
defined.\end{rem*}
\begin{defn}
[non-autonomous dynamical system (NDS)] Let $X$ be a metric space
and $P$ be a set called the base set. A (local) NDS is a pair of
mappings\begin{eqnarray*}
\theta:\mathbb{R}\times P & \to & P\\
\phi:D & \to & X\end{eqnarray*}
such that the following holds:
\begin{itemize}
\item $\theta$ is a (not necessarily continuous) flow, called the base
flow
\item $D$ is open in $\mathbb{R}^{\ge0}\times P\times X$ with $\{0\}\times P\times X\subset D$,
$x\phi^{p}0=x$ and whenever $(s,p,x)\in D$ and $(s+t,p,x)\in D$
for some $s,t\ge0$\[
(t,p\theta s,x\phi^{p}s)\in D\]
and \[
x\phi^{p}(t+s)=(x\phi^{p}s)\phi^{p\theta s}t.\]
Furthermore, we define \[
\omega_{x}^{p}=\sup\{t>0\,|\,(t,p,x)\in D\ \}\]

\item $\phi$ is continuous with respect to $t\in\mathbb{R}_{\ge0}$ and
$x\in X$ for fixed $p\in P$.
\end{itemize}
We call $(\phi,\theta)$ a (local) semiprocess if $\theta$ is continuous
and $P$ a metric space.\end{defn}
\begin{rem*}
To every (semi)process $(\phi,\theta)$ we can associated a semiflow
$\pi:D\to P\times X$ in the following way. If $(t,p,x)\in D$ then
$(p,x)\pi t=(p\theta t,x\phi^{p}t)$. The resulting (semi)flow is
called skew product semiflow. 
\end{rem*}

\begin{rem*}
If $P$ is a compact metric space and $\theta$ continuous then standard
Conley index theory is applicable: The unperturbed system $\pi$ is
a product of the flow $\theta$ and a (semi)flow $\pi_{0}$ on $X$
so that a Conley index $h(N,\pi_{0})$ lifts to \[
h(P\times N,\theta\times\pi_{0})=h(P,\theta)\wedge h(N,\pi_{0}).\]
The stability result follows from a standard continuation result for
the index and $H_{0}(h(P\times N,\theta\times\pi_{0}))\ne0$ iff $H_{0}(h(N,\pi_{0}))\ne0$
(see \cite{Kell2010} for classification of stability via the zeroth
singular homology $H_{0}$ of the index).
\end{rem*}
Although most of the following can be formulated for general NDS if
we assume uniform convergence w.r.t. to the orbit set of $\theta$
(see section \ref{sub:non-auto}) we will restrict our attention to
$P=\mathbb{R}$ and $\tau_{t}(s):=\theta(t,s)=t+s$. This includes
processes generated by non-autonomous differential equations. Furthermore,
we will only look at the induced skew product (semi)flow, i.e. $\pi:D\subset\mathbb{R}^{\ge0}\times\tilde{X}\to\tilde{X}$
with $\tilde{X}=\mathbb{R}\times X$. 

A map $\sigma:J\to X$ with $J\subset\mathbb{R}$ is called a solution
of $\pi$ through $x_{0}\in X$ if $0\in J$ and $\sigma(0)=x_{0}$
and whenever $t,t+s\in J$ for some $s>0$ with $s<\omega_{\sigma(t)}$
then \[
\sigma(t+s)=\sigma(t)\pi s.\]

$\sigma$ is a left solution if $J\cap\mathbb{R}^{-}=(a,0]$ for some
$a\in[-\infty,0)$ and it is called a full left solution if $\mathbb{R}^{-}\subset J$. 

Let $Y\subset X$ be arbitrary. We define the following sets\begin{align*}
A^{+}(Y) & =\{x\in Y\,|\, x\pi t\in Y\,\text{for all }t\in[0,\omega_{x})\}\\
A^{-}(Y) & =\{x\in Y\,|\,\text{\ensuremath{\exists}a full left solution \ensuremath{\sigma\,}through \ensuremath{x\,}with \ensuremath{\sigma(\mathbb{R}^{-})\subset Y}}\}\end{align*}
and \[
A(Y)=A^{-}(Y)\cap A^{+}(Y).\]
$Y$ is called invariant if $Y=A(Y)$, positive invariant if $Y=A^{+}(Y)$
and negative invariant if $Y=A^{-}(Y)$. The sets $A(Y)$ and $A^{\pm}(Y)$
depend on $Y$ and $\pi$. In case we talk about several flows $\pi_{n}$
we will write $A_{\pi_{n}}(Y)$ and $A_{\pi_{n}}^{\pm}(Y)$.

If $S\subset\operatorname{int}N$ for some closed neighborhood $N$
and $S$ is the maximal invariant set in $N$, i.e. $S=A(N)$, then
$S$ is called an isolated invariant set (w.r.t. $\pi$). A closed
set $N$ is called isolating neighborhood if the maximal $\pi$-invariant
set is in the interior of $N$, i.e. $A(N)\subset\operatorname{int}N$.
In particular the closure $\operatorname{cl}U$ of a neighborhood
$U$ of $A(N)$ with $U\subset N$ is an isolating neighborhood.
\begin{defn}
A closed isolating neighborhood $N$ is called stable if \[
A(N)\subset\operatorname{int}N\]
 and $N$ is positive invariant, i.e. \[
A^{+}(N)=N.\]

\end{defn}

\section{\label{sec:Fin}Finite dimensional case}

Although many ideas in this section are taken from Conley index theory
we don't want to introduce the full theory of the Conley index. Many
of the techniques and definitions in this section are based on Benci's
paper \cite{Benci1991}. From now on let $X$ be locally compact.
Since our isolating neighborhood will be compact we will assume that
$\pi_{0}$ is a flow in order to simplify our arguments, i.e. there
will be no finite-time blow-up. This will be true for the skew product
flow induced by a process if the {}``$X$-component'' of set $\tilde{N}\subset\mathbb{R}\times X$
is compact as well (the unbounded component represents time).

From \cite{Benci1991} we take the following definitions:\[
G^{T}(N)=G_{\pi}^{T}(N)=\{x\in X\,|\, x\pi[-T,T]\subset\operatorname{cl}N\}\]
and\[
\Gamma^{T}(N)=\Gamma_{\pi}^{T}(N)=\{x\in G^{T}(N)\,|\, x\pi[0,T]\cap\partial N\ne\varnothing\}.\]
Furthermore we define the set of isolating neighborhoods as \[
\mathcal{F}=\mathcal{F}_{\pi}=\{N\subset X\,|\,\operatorname{int}N\ne\varnothing\,\mbox{and}\,\exists T>0\,\mbox{s.t.}\, G^{T}(N)\subset\operatorname{int}N\}.\]

\begin{lem}
[\cite{Benci1991}]If $N\in\mathcal{F}$ then the following hold:
\begin{enumerate}
\item $T_{1}>T_{2}>0$ then $G^{T_{1}}(N)\subset G^{T_{2}}(N)$
\item $G^{T}(N)$ and $\Gamma^{T}(N)$ are closed and $\Gamma^{T}(N)\subset\partial G^{T}(N)$
\item If $G^{T}(N)\subset\operatorname{int}N$ then $G^{2T}(N)\subset\operatorname{int}G^{T}(N)$
\item If $\Gamma^{T}(N)=\varnothing$ then $A^{+}(G^{T}(N))=G^{T}(N)$
\end{enumerate}
\end{lem}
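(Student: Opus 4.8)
The plan is to prove the four claims roughly in the order stated, since each one feeds into the next.

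For \textbf{(1)}, the plan is an immediate unwinding of the definitions: if $T_1 > T_2 > 0$ then $[-T_2,T_2] \subset [-T_1,T_1]$, so $x\pi[-T_1,T_1] \subset \operatorname{cl} N$ forces $x\pi[-T_2,T_2] \subset \operatorname{cl} N$. Nothing more is needed. For \textbf{(2)}, closedness of $G^T(N)$ follows from continuity of $\pi$ (recall we are assuming $\pi$ is a flow, so $x\pi[-T,T]$ is always defined): if $x_n \to x$ with $x_n\pi[-T,T]\subset\operatorname{cl} N$, then for each $t\in[-T,T]$, $x_n\pi t \to x\pi t$ lies in the closed set $\operatorname{cl} N$. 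For $\Gamma^T(N)$ I would argue that $\{x : x\pi[0,T]\cap\partial N\neq\varnothing\}$ is closed — using that $\partial N$ is closed and a compactness argument on $[0,T]$ (if $x_n\pi t_n\in\partial N$ with $t_n\in[0,T]$, pass to a convergent subsequence $t_n\to t$ and use joint continuity) — and intersect with the closed set $G^T(N)$. The inclusion $\Gamma^T(N)\subset\partial G^T(N)$ is the one point requiring a little care: a point $x\in\Gamma^T(N)$ has some $s\in[0,T]$ with $x\pi s\in\partial N$, and I would produce points arbitrarily close to $x$ that are \emph{not} in $G^T(N)$ by perturbing slightly off $\partial N$ to the outside of $N$ and flowing back by $-s$; continuity of the flow makes these perturbed points leave $\operatorname{cl} N$ within the time window, hence they fall out of $G^T(N)$, showing $x\notin\operatorname{int} G^T(N)$. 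Combined with $x\in G^T(N)$ (closed), this gives $x\in\partial G^T(N)$.

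For \textbf{(3)}, assume $G^T(N)\subset\operatorname{int} N$ and take $x\in G^{2T}(N)$. I want $x\in\operatorname{int} G^T(N)$. The key observation is the cocycle/semigroup relation: for each $\tau\in[-T,T]$, the point $x\pi\tau$ satisfies $(x\pi\tau)\pi[-T,T] = x\pi[\tau-T,\tau+T]\subset x\pi[-2T,2T]\subset\operatorname{cl} N$, so $x\pi\tau\in G^T(N)\subset\operatorname{int} N$. Thus the compact set $x\pi[-T,T]$ lies in the open set $\operatorname{int} N$. Now I would use continuity of $\pi$ together with the tube/compactness lemma: there is a neighborhood $U$ of $x$ such that $U\pi[-T,T]\subset\operatorname{int} N\subset\operatorname{cl} N$, which means $U\subset G^T(N)$, hence $x\in\operatorname{int} G^T(N)$. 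Getting such a uniform tube $U$ from pointwise continuity is the standard Wallace-type lemma argument over the compact interval $[-T,T]$; this is the main technical step of the lemma, though it is routine.

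For \textbf{(4)}, suppose $\Gamma^T(N)=\varnothing$ and let $x\in G^T(N)$; I must show $x\in A^+(G^T(N))$, i.e. $x\pi t\in G^T(N)$ for all $t\ge 0$ (here $\omega_x=\infty$ since $\pi$ is a flow). Fix $t\ge 0$ and suppose for contradiction that $y:=x\pi t\notin G^T(N)$, so $y\pi[-T,T]\not\subset\operatorname{cl} N$. Since $x\in G^T(N)$ we have $x\pi[-T,T]\subset\operatorname{cl} N$, and in particular for small $t$ the orbit segment $y\pi[-T,T] = x\pi[t-T,t+T]$ still overlaps the ``good'' window; I would look at the first time the forward orbit of $x$ leaves $\operatorname{cl} N$ — more precisely, consider $s^* = \sup\{s\ge 0 : x\pi[0,s]\subset\operatorname{cl} N\}$. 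If $s^*=\infty$ one checks directly $x\pi t\in G^T(N)$ using $\Gamma^T(N)=\varnothing$ to rule out boundary touching; if $s^*<\infty$, then $x\pi s^*\in\partial N$ by closedness, and then $x\pi(s^*-T)\in G^T(N)$ or rather the point whose orbit passes through this boundary point at a time in $[0,T]$ lands in $\Gamma^T(N)$, contradicting emptiness. I would organize this as: the only way to leave $G^T(N)$ going forward is to have the orbit meet $\partial N$, and any such meeting, read from the appropriate earlier point, exhibits a member of $\Gamma^T(N)$. The bookkeeping with the time windows $[-T,T]$ versus $[0,T]$ is the delicate part here, but it is exactly the argument in \cite{Benci1991} and the emptiness of $\Gamma^T(N)$ does all the work.
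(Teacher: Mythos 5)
Your proof is correct, and it is essentially the standard argument; the paper itself offers no proof of this lemma, attributing it to Benci, so there is no in-paper proof to diverge from. One caveat: in (2), producing nearby points \emph{outside} $\operatorname{cl}N$ from $x\pi s\in\partial N$ tacitly uses that $N$ is closed (as it is throughout the paper, where isolating neighborhoods are closed, indeed compact); for non-closed $N$ the inclusion $\Gamma^{T}(N)\subset\partial G^{T}(N)$ can genuinely fail, since $\partial N$ may meet $\operatorname{int}\operatorname{cl}N$, so it is worth stating that hypothesis. Also, in (4) the case $s^{*}=\infty$ needs no appeal to $\Gamma^{T}(N)=\varnothing$: the forward orbit then lies in $\operatorname{cl}N$, which together with $x\pi[-T,0]\subset\operatorname{cl}N$ already gives $x\pi t\in G^{T}(N)$ for all $t\ge0$; the emptiness of $\Gamma^{T}(N)$ is used only to rule out $s^{*}<\infty$ via the witness $x\pi(s^{*}-T)\in\Gamma^{T}(N)$, exactly as you set it up.
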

\begin{rem*}
It can be shown that for large $T$ the pair $(G^{T}(N),\Gamma^{T}(N))$
defines an index pair (see section \ref{sec:Inf} for the definition).
We will focus only on stable neighborhoods for which $\Gamma^{T}(N)=\varnothing$
for large $T$. We can show that for arbitrary isolated invariant
set with non-trivial Conley index the index continues even for sufficiently
small non-autonomous perturbations. But the corresponding index pair
$(N_{1},N_{2})$ is unbounded. In particular, we cannot show in general
that $A(N_{1})\ne0$ if the index $(N_{1}/N_{2},[N_{2}])$ is non-trivial.
\end{rem*}
A standard result from Conley index theory is the following theorem.
This result also holds under the assumption that $N$ is strongly
$\pi$-admissible (see section \ref{sec:Inf} for the definition).
\begin{thm}
[{\cite[Corollary 5.5]{Rybakowski1987}}]\label{thm:stable} Let $N$
be a compact isolating neighborhood such that \[
A^{-}(N)=A(N)=K\ne\varnothing\]
then there exist a stable isolating neighborhood $B\subset\operatorname{int}N$
such that for all $x\in\partial B$ there is an $\epsilon>0$ such
that for all $t\in(0,\epsilon)$\[
x\pi(-t)\notin B\]
and \[
x\pi t\in\operatorname{int}B.\]
An isolating neighborhood with this property will be called stable
isolating block. \end{thm}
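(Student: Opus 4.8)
The idea is to build the stable block $B$ as a sublevel set of a Lyapunov-type function, or equivalently as $G^T(N)$ for a suitably large $T$, and then to correct it near the boundary so that the strict entry/exit conditions hold. Since $A^-(N)=A(N)=K\subset\operatorname{int}N$, no full left solution leaves $N$; combined with compactness of $N$ and the no-blow-up assumption, this forces $G^T(N)$ to shrink into $\operatorname{int}N$ for $T$ large, so $N\in\mathcal F_\pi$. The key point is that the absence of a negatively invariant set touching $\partial N$ other than $K$ means $\Gamma^T(N)=\varnothing$ for $T$ large: if not, a standard compactness argument produces a sequence $x_n\in\Gamma^{T_n}(N)$ with $T_n\to\infty$ accumulating at a point whose full orbit stays in $N$ and touches $\partial N$, contradicting $A(N)\subset\operatorname{int}N$. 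With $\Gamma^T(N)=\varnothing$, part (4) of Benci's lemma gives $A^+(G^T(N))=G^T(N)$, i.e. $G^T(N)$ is positive invariant, and parts (2)--(3) give that it is a compact isolating neighborhood of $K$ in $\operatorname{int}N$.

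First I would fix such a large $T_0$ and set, as a first approximation, $B_0=G^{T_0}(N)$. This is closed, positive invariant, and satisfies $K=A(B_0)\subset\operatorname{int}B_0$ by part (3) applied at scale $T_0$ (using $G^{2T_0}(N)\subset\operatorname{int}G^{T_0}(N)$ and monotonicity). The remaining work is to upgrade $B_0$ to a genuine \emph{stable isolating block}: for every $x\in\partial B_0$ I need a short forward arc entering $\operatorname{int}B_0$ and the property that $x\pi(-t)\notin B_0$ for small $t>0$. Positive invariance already gives $x\pi t\in B_0$ for all $t\ge0$; the issue is strictness (entering the interior) and the backward exit. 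Here I would invoke the construction behind \cite[Corollary 5.5]{Rybakowski1987}: one "rounds" the corners of $B_0$ by flowing its boundary for a small time, replacing $B_0$ with $B=\{x\pi s : x\in\partial B_0,\ s\in[0,\delta]\}$'s complement-type modification, or more cleanly one uses the standard block construction from an index pair where the exit set is empty. Because $\Gamma^{T_0}(N)=\varnothing$, the exit set of the index pair $(G^{T_0}(N),\Gamma^{T_0}(N))$ is empty, which is precisely the hypothesis under which the block-regularization yields a positively invariant block with no exit points, hence the desired strict inequalities on $\partial B$.

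The main obstacle, I expect, is the passage from a merely positive-invariant compact neighborhood to one with the \emph{strict} boundary behavior — ensuring simultaneously $x\pi t\in\operatorname{int}B$ for small $t>0$ (immediate entry) and $x\pi(-t)\notin B$ for small $t>0$ (immediate backward exit) for \emph{every} boundary point, with no "tangential" points left over. This is exactly the technical heart of the Conley-theoretic block construction, and it is where local compactness of $X$ and compactness of $N$ are used: one needs uniform continuity of $\pi$ on $[-T_0,T_0]\times N$ to choose the rounding time $\delta$ uniformly, and one needs that the set of points whose forward orbit is tangent to $\partial B_0$ is nowhere dense so that flowing for time $\delta$ genuinely pushes the boundary strictly inward. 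Everything else — closedness, the inclusion $B\subset\operatorname{int}N$, and $A(B)=K$ — then follows from parts (1)--(3) of Benci's lemma and the continuity of $\pi$.
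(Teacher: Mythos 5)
Your reduction to a compact, positively invariant isolating neighborhood is essentially right: the compactness argument giving $G^{T}(N)\subset\operatorname{int}N$ and $\Gamma^{T}(N)=\varnothing$ for large $T$ works, with one imprecision worth noting — the accumulation point of the boundary-touching points $x_{n}\pi s_{n}$ is a priori only in $A^{-}(N)$ (its forward time inside $N$ may stay bounded), so the contradiction comes from the hypothesis $A^{-}(N)=A(N)=K\subset\operatorname{int}N$, not from applying $A(N)\subset\operatorname{int}N$ to a ``full orbit''. This is the same mechanism as in Lemma \ref{lem:Gamma_empty}. With $\Gamma^{T}(N)=\varnothing$, Benci's lemma indeed makes $B_{0}=G^{T}(N)$ closed, positively invariant, with $A(B_{0})=K\subset\operatorname{int}B_{0}$.

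The genuine gap is the step you yourself flag as the ``technical heart'': upgrading $B_{0}$ to a set $B$ with the strict boundary behaviour ($x\pi t\in\operatorname{int}B$ and $x\pi(-t)\notin B$ for small $t>0$ at every $x\in\partial B$). At this point you propose to ``invoke the construction behind Rybakowski's Corollary 5.5'' — but that corollary \emph{is} the statement to be proved, so this is circular — or to round corners by flowing $\partial B_{0}$ for a small time, together with an unsubstantiated claim that tangency points are nowhere dense. Neither is a proof: positive invariance alone does not give the block property (boundary points of $G^{T}(N)$ can slide along the boundary forward in time, or have short backward arcs inside), and no genericity-of-tangencies argument occurs in the standard construction. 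The standard way to close the gap — and the one this paper itself uses later — is a Lyapunov-type function: on the compact positively invariant $B_{0}$ every forward orbit converges to $K$, so $g^{-}(x)=\sup\{\alpha(t)F(x\pi t)\,|\,t\ge0\}$ with $F(x)=\min\{1,\operatorname{dist}(x,K)\}$ and $\alpha:[0,\infty)\to[1,2)$ strictly increasing is continuous, vanishes exactly on $K$, and is strictly decreasing along the flow off $K$; its sublevel sets $B_{\epsilon}=(g^{-})^{-1}([0,\epsilon])$ for small $\epsilon>0$ are compact stable isolating blocks in $\operatorname{int}N$ (strict decrease gives immediate entry into $\{g^{-}<\epsilon\}\subset\operatorname{int}B_{\epsilon}$ and excludes $x\pi(-t)\in B_{\epsilon}$). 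This is precisely the function $g^{-}$ recalled in the proof of Corollary \ref{cor:cont}; note also that the paper does not prove the theorem itself but takes it from Rybakowski by citation, so your proposal should be judged against that construction, which it does not reproduce.
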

\begin{rem*}
Suppose we got $B'\subset N$ as a result from the theorem. Applying
the theorem again we get a second stable isolating block $B\subset B'$.
Because $B'$ is compact and $K$ is in its interior, $U_{\delta}(B)=\{x\in X\,|\, d(x,B)<\delta\}\subset B'$
for some $\delta>0$.
\end{rem*}
Let $\pi_{0}$ be a flow on $X$. Then we can define a skew product
flow $\pi$ on $\mathbb{R}\times X$ by \[
(s,x)\pi t=(\tau_{s}(t),x\pi_{0}t).\]
 Thus $\pi_{0}$ can be considered as a process which does not depend
on the base flow $\tau$ and is therefore translation invariant w.r.t.
time. 
\begin{defn}
[Semi-singular convergence]Let $\pi_{n}$ be a sequence of semiflows
on $\mathbb{R}\times X$ and $(s_{n},x_{n})_{n\in\mathbb{N}}$ be
any sequence in $\mathbb{R}\times X$ and $t_{n}\in\mathbb{R}^{\ge0}$
such that $x_{n}\to x_{0}$ and $t_{n}\to t_{0}$ then we say $\pi_{n}$
converges semi-singularly to $\pi_{0}$ if \[
P_{2}((s_{n},x_{n})\pi_{n}t_{n})\to x_{0}\pi_{0}t_{0}\]
where $P_{2}(s,x)=x$. We write $\pi_{n}\overset{\mbox{\tiny ssing}}{\longrightarrow}\pi_{0}$.
Semi-singular convergence implies that $\pi_{n}\to\pi$ in the usual
sense if each $\pi_{n}$ is skew product flow of processes because
if in addition $s_{n}\to s_{0}$ for some $s_{0}\in\mathbb{R}$ then
\[
(s_{n},x_{n})\pi_{n}t_{n}=(s_{n}+t_{n},P_{2}((s_{n},x_{n})\pi_{n}t_{n}))\to(s_{0}+t_{0},x_{0}\pi_{0}t_{0})=(s_{0},x_{0})\pi t_{0}.\]
\end{defn}
\begin{rem*}
Semi-singular convergence is a simplified version of singular convergence
defined in \cite{Carbinatto2002}. With the metric $d_{\epsilon}(t,s)=\epsilon\min\{|t-s|,1\}$
we recover the singular convergence.
\end{rem*}
Assume from now on that $\pi_{n}$ is a sequence of flows such that
$\pi_{n}\overset{\mbox{\mbox{\tiny ssing}}}{\longrightarrow}\pi_{0}$. 
\begin{example*}
The standard example is an autonomous ordinary differential equation
\[
\dot{x}=f_{0}(x)\]
and the non-autonomous ODEs\[
\dot{x}=f_{n}(t,x)\]
such that $\sup_{t}\|f_{n}(t,x)-f_{0}(x)\|<\epsilon_{n}$ with $\epsilon_{n}\to0$.
If $f_{0}$ and $f_{n}$ are locally Lipschitz continuous then they
induce a local flow $\pi_{0}$, resp. local processes $\phi_{n}$.
Restricted to some compact set in $X$ we can assume that $\pi_{0}$
and $\phi_{n}$ are defined everywhere. Suppose we have $(s_{n},x_{n})_{n\in\mathbb{N}}$
with $x_{n}\to x_{0}$ and $t_{n}\to t_{0}$. The ODEs defined by\[
\dot{x}=\tilde{f}_{n}(t,x)=f_{n}(s_{n}+t,x)\]
generate time-translated processes $\tilde{\phi}_{n}$. We still have
$\sup_{t}\|\tilde{f}_{n}(t,x)-f_{0}(x)\|<\epsilon_{n}$. The sequence
$(s_{n},x_{n})_{n\in\mathbb{N}}$ corresponds to $(0,x_{n})_{n\in\mathbb{N}}$.
Interpreting $\phi_{n}$ and $\tilde{\phi}_{n}$ as skew products
$\pi_{n}$ and $\tilde{\pi}_{n}$ we see that $\tilde{\pi}_{n}\to\pi=\tau\times\pi_{0}$
which implies semi-singular convergence of $\pi_{n}$.
\end{example*}
Suppose now $S\subset X$ is an attractor for $\pi_{0}$. Then there
exists a stable isolating block $B$. Thus $N:=\mathbb{R}\times B$
is a stable isolating block for $\pi$ with $A_{\pi}(N)=\mathbb{R}\times S$.
Furthermore, it can be shown that for all $T>0$ \begin{eqnarray*}
G_{\pi}^{T}(N) & = & \mathbb{R}\times G_{\pi_{0}}^{T}(B)\\
 & = & \mathbb{R}\times(B\pi_{0}T)\end{eqnarray*}
and $\Gamma_{\pi}^{T}(N)=\varnothing$. 

Since $B$ is compact $(\cdot)\pi t:N\to N$ is uniformly continuous
for all $t\ge0$. And for each $T>0$ there is some $\delta>0$ such
that $U_{\delta}(G_{\pi}^{T}(N))\subset N$ where $U_{\epsilon}(A)=\{x\in X\,|\, d(x,A)<\epsilon\}$.

We assume here that $\pi_{n}$ does not blow-up in finite time in
$N$. This will always be the case if $\pi_{n}$ is a skew product
flow because the unbounded component of $N=\mathbb{R}\times B$ represents
the time and $B$ is compact. Hence whenever $\omega_{(s,x)}^{\pi_{n}}<\infty$
for $(s,x)\in N$ then $(s,x)\pi_{n}[0,\omega_{(s,x)}^{\pi_{n}})\not\subset N$.
\begin{lem}
For every $T>0$ there is an $n_{0}>0$ such that for all $n\ge n_{0}$\[
G_{\pi_{n}}^{T}(N)\ne\varnothing.\]
\end{lem}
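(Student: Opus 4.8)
The plan is to exhibit, for all large $n$, one explicit point of $G_{\pi_{n}}^{T}(N)$: I would follow the $\pi_{n}$-orbit of a point of the attractor for time $2T$ and take its midpoint. Fix $x^{*}\in S$. Since $\pi_{0}$ has no finite-time blow-up and $S$ is positive invariant, the continuous image $x^{*}\pi_{0}[0,2T]$ is a compact subset of $S\subset\operatorname{int}B$; hence the unperturbed orbit $(0,x^{*})\pi t=(t,x^{*}\pi_{0}t)$ stays in $\operatorname{int}N=\mathbb{R}\times\operatorname{int}B$ for all $t\in[0,2T]$.

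I would then control the perturbed orbits by a \emph{first-exit-time} argument. For each $n$ put
\[
t_{n}^{*}=\sup\{t\in[0,2T]\,:\,(0,x^{*})\pi_{n}[0,t]\subset N\},
\]
which is well defined because $t=0$ qualifies. On $[0,t_{n}^{*})$ the orbit lies in $N$, so the no-blow-up-in-$N$ property rules out $t_{n}^{*}=\omega_{(0,x^{*})}^{\pi_{n}}$ (that would force $\omega_{(0,x^{*})}^{\pi_{n}}=\infty$); thus $\omega_{(0,x^{*})}^{\pi_{n}}>t_{n}^{*}$, the orbit is defined and continuous on $[0,t_{n}^{*}]$ with $(0,x^{*})\pi_{n}[0,t_{n}^{*}]\subset N$, and by maximality, if $t_{n}^{*}<2T$, then $(0,x^{*})\pi_{n}t_{n}^{*}\notin\operatorname{int}N$, i.e. $P_{2}((0,x^{*})\pi_{n}t_{n}^{*})\notin\operatorname{int}B$.

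The crux is to show $t_{n}^{*}=2T$ for all large $n$. Suppose not; then $t_{n}^{*}<2T$ along a subsequence, along a further subsequence of which $t_{n}^{*}\to t_{\infty}\in[0,2T]$. Semi-singular convergence, applied to the constant sequences $s_{n}\equiv0$, $x_{n}\equiv x^{*}$ with times $t_{n}^{*}$ (legitimate since $t_{n}^{*}<\omega_{(0,x^{*})}^{\pi_{n}}$), gives $P_{2}((0,x^{*})\pi_{n}t_{n}^{*})\to x^{*}\pi_{0}t_{\infty}$. Since these values lie in the closed set $X\setminus\operatorname{int}B$, the limit satisfies $x^{*}\pi_{0}t_{\infty}\notin\operatorname{int}B$, contradicting $x^{*}\pi_{0}t_{\infty}\in x^{*}\pi_{0}[0,2T]\subset\operatorname{int}B$. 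Hence there is $n_{0}$ with $t_{n}^{*}=2T$, and therefore $(0,x^{*})\pi_{n}[0,2T]\subset N$ with $\omega_{(0,x^{*})}^{\pi_{n}}>2T$, for all $n\ge n_{0}$. For such $n$, since $\pi_{n}$ is a flow, the point $x_{n}:=(0,x^{*})\pi_{n}T$ satisfies $x_{n}\pi_{n}[-T,T]=(0,x^{*})\pi_{n}[0,2T]\subset N=\operatorname{cl}N$, so $x_{n}\in G_{\pi_{n}}^{T}(N)$ and $G_{\pi_{n}}^{T}(N)\ne\varnothing$. The one genuinely delicate step is the middle one --- ruling out that the perturbed orbit leaves $N$ or blows up before time $2T$ --- and it is exactly there that the no-blow-up-in-$N$ hypothesis and the compactness of $[0,2T]$ (used to extract a convergent subsequence of the exit times) are needed.
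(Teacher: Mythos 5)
Your proof is correct and complete: the paper dismisses this lemma as an ``easy exercise'', and your argument --- following $(0,x^{*})\pi_{n}$ for a point $x^{*}\in S\subset\operatorname{int}B$ over $[0,2T]$, controlling it via the first-exit time, the no-blow-up-in-$N$ assumption and semi-singular convergence, and then taking the midpoint $(0,x^{*})\pi_{n}T\in G_{\pi_{n}}^{T}(N)$ --- is exactly the natural intended one. The only slight liberty is invoking semi-singular convergence along a subsequence of $\{\pi_{n}\}$, but this matches the paper's own usage in the proof of Lemma \ref{lem:Gamma_empty}, and the use of two-sided time at the last step is covered by the standing assumption that the $\pi_{n}$ are flows.
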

\begin{proof}
easy exercise\end{proof}
\begin{lem}
\label{lem:Gamma_empty}There is a $T_{0}$ and a $n_{0}$ such that
for all $T\ge T_{0}$ and $n\ge n_{0}$\[
\Gamma_{\pi_{n}}^{T}(N)=\varnothing.\]
\end{lem}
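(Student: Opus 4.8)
The plan is to argue by contradiction. If the lemma fails, then taking $T_0=n_0=k$ in its negation and passing to a subsequence yields $T_k\to\infty$, $n_k\to\infty$ and points $(s_k,x_k)\in\Gamma_{\pi_{n_k}}^{T_k}(N)$. Writing $N=\mathbb{R}\times B$, each $x_k$ lies in the compact set $B$, the orbit $(s_k,x_k)\pi_{n_k}[-T_k,T_k]$ stays in $N$, and there is $\tau_k\in[0,T_k]$ with $(s_k,x_k)\pi_{n_k}\tau_k\in\partial N=\mathbb{R}\times\partial B$, so its $X$-component $y_k$ lies in $\partial B$. Passing to a subsequence I may assume $x_k\to x_0\in B$, and, refining once more, either $\tau_k\to\tau_0<\infty$ or $\tau_k\to\infty$.

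In the bounded case, semi-singular convergence together with the closedness of $\partial B$ gives $x_0\pi_0\tau_0\in\partial B$. If $\tau_0>0$, positive invariance of $B$ gives $x_0\pi_0(\tau_0-t)\in B$ for all small $t>0$, whereas the stable isolating block property of $B$ (Theorem~\ref{thm:stable}) forces $(x_0\pi_0\tau_0)\pi_0(-t)=x_0\pi_0(\tau_0-t)\notin B$ for small $t>0$, a contradiction. If $\tau_0=0$, then $x_0\in\partial B$; pick a small $t_0>0$ with $x_0\pi_0(-t_0)\notin B$, again by the block property. Since $(s_k,x_k)\in G_{\pi_{n_k}}^{T_k}(N)$ and $t_0\le T_k$ eventually, the point $(s_k,x_k)\pi_{n_k}(-t_0)$ lies in $N$, so its $X$-component $v_k$ lies in $B$; after a further subsequence $v_k\to v_0\in B$. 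Applying semi-singular convergence at the constant time $t_0$ to the backward-shifted points $(s_k,x_k)\pi_{n_k}(-t_0)$ — using that the $\pi_n$ are genuine flows, so $\bigl((s_k,x_k)\pi_{n_k}(-t_0)\bigr)\pi_{n_k}t_0=(s_k,x_k)$ — gives $x_k\to v_0\pi_0t_0$, hence $x_0=v_0\pi_0t_0$, i.e. $x_0\pi_0(-t_0)=v_0\in B$: contradiction.

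The case $\tau_k\to\infty$ is the main obstacle, since semi-singular convergence cannot be invoked at a diverging time. Here I would first promote it, by a routine compactness argument, to a bound uniform over $B$: there is $n_1$ such that $d\bigl(P_2((s,z)\pi_n t),z\pi_0t\bigr)<\delta$ for all $n\ge n_1$, all $s\in\mathbb{R}$, all $z\in B$ and all $t\in[0,T^*]$ along which the orbit stays in $N$, where $T^*>0$ and $\delta>0$ are chosen — using that $S$ is an attractor for $\pi_0$ — so that $C:=B\pi_0T^*$ is compact with $U_\delta(C)\subset\operatorname{int}B$. Then for large $k$ the point $p_k:=(s_k,x_k)\pi_{n_k}(\tau_k-T^*)$ lies in $N$ with $X$-component $q_k\in B$, its forward orbit up to time $T^*$ stays in $N$, and $p_k\pi_{n_k}T^*=(s_k,x_k)\pi_{n_k}\tau_k$ forces $y_k\in U_\delta(q_k\pi_0T^*)\subset U_\delta(C)\subset\operatorname{int}B$, contradicting $y_k\in\partial B$. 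One can also sidestep this last case: for a flow $\pi$ and closed $N$, if $\Gamma_\pi^{T'}(N)=\varnothing$ then $\Gamma_\pi^T(N)=\varnothing$ for every $T\ge T'$, because if $(s,x)\in\Gamma_\pi^T(N)$ meets $\partial N$ at time $\tau$ then $\Gamma_\pi^{T'}(N)$ contains $(s,x)$ when $\tau\le T'$ and $(s,x)\pi(\tau-T')$ otherwise; so it is enough to fix a single $T_0$, for which $\tau_k\in[0,T_0]$ is automatically bounded.
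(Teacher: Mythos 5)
Your argument is correct, but it runs along a genuinely different track than the paper's. The paper also argues by contradiction, but its mechanism is a diagonal subsequence construction: from the boundary-hitting endpoints it extracts, for every $k$, limits $\tilde z^k$ of the orbit points at time $t_n-k$, uses semi-singular convergence only at the fixed finite times $t\in[0,k]$ to glue these into a full left solution through a limit point $\tilde z^0\in\partial B$, and then contradicts $\partial B\cap A^{-}_{\pi_0}(B)=\varnothing$. You instead split according to whether the hitting times $\tau_k$ stay bounded: in the bounded case you combine semi-singular convergence at the limiting time with positive invariance of $B$ and the backward-exit property of the stable isolating block from Theorem \ref{thm:stable} (including a careful $\tau_0=0$ subcase that uses backward orbits of the perturbed flows, legitimate here since the $\pi_n$ are flows); in the unbounded case you upgrade semi-singular convergence, by compactness of $B$ and $[0,T^{*}]$, to a uniform finite-horizon estimate and exploit the attractor-specific fact that $C=B\pi_0T^{*}=G^{T^{*}}_{\pi_0}(B)$ satisfies $U_\delta(C)\subset\operatorname{int}B$, so a boundary point cannot be reached after time $T^{*}$ inside $N$. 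Your closing observation that $\Gamma_\pi^{T'}(N)=\varnothing$ implies $\Gamma_\pi^{T}(N)=\varnothing$ for $T\ge T'$ is a nice extra that lets one fix a single $T_0$ and avoid the unbounded case altogether. What each approach buys: yours is more elementary and quantitative in the locally compact flow setting and avoids constructing a full left solution, but it leans on compactness of $B$ and $\partial B$, on backward solvability of the $\pi_n$, and on the specific stability/attractor structure of $B$; the paper's version uses only that the negatively invariant set in $B$ avoids $\partial B$, which is exactly why (as its preceding remark notes) the same argument transfers to the infinite-dimensional setting under semi-singular admissibility, where your compactness and uniformization steps would not be available. The only places where you wave your hands (the "routine compactness" uniformization and applying semi-singular convergence along subsequences) are indeed routine and at the same level of rigor as the paper's own use of subsequences.
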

\begin{rem*}
The proof is based on {\cite[Lemma 3.5]{Carbinatto2002}}. The idea
is to show that if it does not hold then $K=A_{\pi_{0}}(B)$ and $\partial B$
intersect non-trivial which is impossible because $K$ is isolated.
We will give the whole proof because the fact that $\tilde{z}\in A_{\pi_{0}}^{-}(B)$
will also apply to the infinite-dimensional case if we assume $\{\pi_{n}\}$-semi-singular-admissibility
of $N$. \end{rem*}
\begin{proof}
Suppose this is not the case. Then there exist a sequence $T_{n}\to\infty$
and a subsequence of $\pi_{n}$, also denoted by $\pi_{n}$, such
that $E_{n}:=\Gamma_{\pi_{n}}^{T_{n}}(N)\ne\varnothing$ for all $n$.

This implies that there is a sequence $x_{n}\in N$ such that for
some $s_{n}\in[0,T_{n}]$ we have $x_{n}\pi_{n}[0,t_{n}]\subset N$,
$x_{n}\pi_{n}T_{n}\in E_{n}$ and $x_{n}\pi t_{n}\in\partial N$ with
$t_{n}=T_{n}+s_{n}$.

Since $N=\mathbb{R}\times B$ we have $\partial N=\mathbb{R}\times\partial B$.
So that $x_{n}\pi t_{n}=(r_{n},\tilde{z}_{n})$ for some $\tilde{z}_{n}\in\partial B$.
Because $\partial B$ is compact there is a subsequence of $x_{n}\pi_{n}t_{n}$
denoted by $z_{n}^{0}=x_{n}^{0}\pi_{n}^{0}t_{n}^{0}$ such that $\tilde{z}_{n}^{0}\to\tilde{z}^{0}\in\partial B$. 

Since $B$ is compact, $t_{n}^{0}\to\infty$ and $x_{n}^{0}\pi_{n}^{0}[0,t_{n}^{0}]\subset N$
there is a subsequence $(x_{n}^{1}\pi_{n}^{1}t_{n}^{1})_{n\in\mathbb{N}}$
with $t_{n}^{1}\ge1$ such that $(r_{n}^{1},\tilde{z}_{n}^{1})=x_{n}^{1}\pi_{n}^{1}(t_{n}^{1}-1)$
is defined and $\tilde{z}_{n}^{1}\to\tilde{z}^{1}\in B$. Recursively,
for each $k\ge1$ we get a subsequence $(x_{n}^{k}\pi_{n}^{k}t_{n}^{k})_{n\in\mathbb{N}}$
of $(x_{n}^{k-1}\pi_{n}^{k-1}t_{n}^{k-1})_{n\in\mathbb{N}}$ with
$t_{n}^{k}\ge k$ such that $z_{n}^{k}=(r_{n}^{k},\tilde{z}_{n}^{k})=x_{n}^{k}\pi_{n}^{k}(t_{n}^{k}-k)$
and $\tilde{z}_{n}^{k}\to\tilde{z}^{k}$ for some $\tilde{z}^{k}\in B$.

We claim that $\tilde{z}^{k}\pi_{0}t\in N$ all $t\in[0,k]$ and $\tilde{z}^{k}\pi_{0}k=\tilde{z}^{0}$,
i.e. $\tilde{z}^{0}\in A_{\pi_{0}}^{-}(B)$. Postponing the proof
of this claim we immediately get that $\tilde{z}^{0}\in\partial B\cap A^{-}(B)$
but $B$ is a stable isolating neighborhood such that $A(B)=A^{-}(B)\subset\operatorname{int}B$,
i.e. $\partial B\cap A_{\pi_{0}}^{-}(B)=\varnothing$, which is a
contradiction and thus the lemma is true.

It remains to proof our claim. Suppose $\tilde{z}^{k}\pi_{0}t_{0}\notin B$
for some $t_{0}\in[0,k]$. Because $\pi_{n}\overset{\mbox{\tiny ssing}}{\longrightarrow}\pi_{0}$
the sequence $\tilde{y}_{n}$ with $(s_{n},\tilde{y}_{n})=z_{n}^{k}\pi_{n}^{k}t_{0}$
converges to $\tilde{z}^{k}\pi_{0}t_{0}$, but $\tilde{y}_{n}\in B$
and $B$ is closed which is impossible. Hence $\tilde{z}^{k}\pi_{0}[0,k]\subset B$. 
\end{proof}
Combining the two previous lemmas we obtain that there is a $T_{0}$
and an $n_{0}$ such that for all $T\ge T_{0}$ and $n\ge n_{0}$
the set $G_{\pi_{n}}^{T}(N)\ne\varnothing$ is a stable isolating
neighborhood for $\pi_{n}$.

Because $B$ is compact and $(\cdot)\pi t$ is uniformly continuous
on $N$, there is a $\delta=\delta(\pi_{0},B)>0$ such that for $T\ge T_{0}$
\begin{eqnarray*}
U_{\delta}(G_{\pi}^{2T}(N)) & \subset & G_{\pi}^{T}(N)\end{eqnarray*}
and \begin{eqnarray*}
U_{\delta}(G_{\pi}^{T}(N)) & \subset & N.\end{eqnarray*}

Furthermore, choose $\delta$ small enough and $T$ large enough so
that $U_{\delta}(B)\subset B'$ (see remark after theorem \ref{thm:stable})
and \[
G_{\pi_{0}}^{T}(B')\subset B,\]
i.e. $U_{\delta}(G_{\pi}^{T}(U_{\delta}(N)))\subset N$ holds as well.
\begin{rem*}
Benci (\cite{Benci1991}) defines a set $\Sigma_{0}$ which contains
all sets having a similar property and uses it to prove the continuation
of the Conley index. Most proofs of the inclusions like $U_{\delta}(G_{\pi}^{2T}(N))\subset G_{\pi_{n}}^{T}(N)$
are omitted in that paper. 
\end{rem*}
In addition assume now that each $\pi_{n}$ is a skew product flow,
i.e. \[
P_{1}((s,x)\pi_{n}t)=s+t.\]

\begin{thm}
\label{thm:finite}If for large $n\ge n_{0}$ and $T\ge T_{0}$ we
have \[
d(x\pi_{n}t,x\pi t)<\frac{\delta}{3}\]
 for all $t\in[-T,T]$ and $x\in N$ then\[
A_{\pi_{n}}(N)\ne\varnothing\]
is an isolated invariant set with compact $t$-slices such that for
an $\epsilon=\epsilon(\pi_{0},B,T)>0$ \[
U_{\epsilon}(A_{\pi_{n}}(N))\subset G_{\pi_{n}}^{T}(N).\]
\end{thm}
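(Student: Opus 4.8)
The plan is to work throughout with the stable isolating neighborhood $M_n:=G_{\pi_n}^{T}(N)$. By the two preceding lemmas, for $T\ge T_0$ and $n\ge n_0$ the set $M_n$ is non-empty, closed, positive invariant and an isolating neighborhood, and $\Gamma_{\pi_n}^{T}(N)=\varnothing$ forces $M_n\subset\operatorname{int}N$; moreover its $t$-slices are closed subsets of the compact set $B$. Since every full $\pi_n$-orbit lying in $N$ in fact lies in $M_n$, one has $A_{\pi_n}(N)=A_{\pi_n}(M_n)$, and part~(3) of Benci's lemma applied to $\pi_n$ gives $A_{\pi_n}(N)\subset G_{\pi_n}^{2T}(N)\subset\operatorname{int}M_n\subset\operatorname{int}N$. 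Hence, once $A_{\pi_n}(N)\neq\varnothing$ is known, it is automatically an isolated invariant set whose $t$-slices are closed subsets of $B$, hence compact. So only two points remain: non-emptiness of $A_{\pi_n}(N)$, and the $n$-independent radius $\epsilon$.

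For non-emptiness I would first turn the closeness hypothesis into the inclusion $G_{\pi}^{2T}(N)\subset M_n$. Since $G_{\pi}^{2T}(N)=\mathbb{R}\times G_{\pi_0}^{2T}(B)$, any $(\tau,y)$ in it satisfies $y\pi_0 t\in G_{\pi_0}^{T}(B)$ for all $t\in[-T,T]$, hence $d(y\pi_0 t,X\setminus B)\ge\delta$ because $U_{\delta}(G_{\pi_0}^{T}(B))\subset B$; then $d(x\pi_n t,x\pi t)<\delta/3$ on $N$ forces $P_2((\tau,y)\pi_n t)\in B$, i.e. $(\tau,y)\in M_n$. In particular every $t$-slice of $M_n$ is non-empty (it contains $G_{\pi_0}^{2T}(B)\supseteq K$). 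Now fix $\tau_0\in\mathbb{R}$; for each $m$ pick $x_m$ with $(\tau_0-m,x_m)\in M_n$, and use positive invariance of $M_n$ to see that $(\tau_0,z_m):=(\tau_0-m,x_m)\pi_n m\in M_n$ while the backward piece $(\tau_0,z_m)\pi_n(-s)=(\tau_0-m,x_m)\pi_n(m-s)$ lies in $M_n$ for $0\le s\le m$. As $z_m\in B$ I would pass to a subsequence $z_{m_j}\to z_\infty$; continuity of the single fixed flow $\pi_n$ and closedness of $M_n$ then show that the whole backward orbit of $(\tau_0,z_\infty)$ stays in $M_n\subset N$, so $(\tau_0,z_\infty)\in A^{-}_{\pi_n}(N)$, and positive invariance upgrades this to $(\tau_0,z_\infty)\in A_{\pi_n}(N)$.

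For the uniform radius I would argue by contradiction. If no $n$-independent $\epsilon$ existed, there would be $n_k\to\infty$, points $z_k\in U_{1/k}(A_{\pi_{n_k}}(N))\setminus G_{\pi_{n_k}}^{T}(N)$ and companion points $x_k\in A_{\pi_{n_k}}(N)$ with $d(z_k,x_k)<1/k$. The $X$-components of $z_k$ and $x_k$ lie in the compact $B$, so after passing to a subsequence (along which $(\pi_{n_k})$ still converges semi-singularly) they converge to a common limit $\xi_\infty$. Since $x_k$ has a full $\pi_{n_k}$-orbit in $N$, semi-singular convergence gives $P_2(x_k\pi_{n_k}t)\to\xi_\infty\pi_0 t$ for each $t$, whence $\xi_\infty\pi_0\mathbb{R}\subset B$ and $\xi_\infty\in A_{\pi_0}(B)=K\subset\operatorname{int}B$. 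On the other hand $z_k\notin G_{\pi_{n_k}}^{T}(N)$ supplies some $t_k\in[-T,T]$, which we may assume converges to $t_\infty$, with $P_2(z_k\pi_{n_k}t_k)\in X\setminus B$; applying semi-singular convergence once more yields $\xi_\infty\pi_0 t_\infty\in\operatorname{cl}(X\setminus B)=X\setminus\operatorname{int}B$, contradicting $\xi_\infty\pi_0 t_\infty\in K\subset\operatorname{int}B$ by invariance of $K$. This produces an $\epsilon$ independent of $n$, and as every object used is built only from $\pi_0$, $B$ and $T$, one records it as $\epsilon=\epsilon(\pi_0,B,T)$.

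The step I expect to be the main obstacle is exactly this uniformity in $n$: the family $(\pi_n)$ carries no modulus of continuity uniform in $n$, and the non-compactness of $N$ in the $\mathbb{R}$-direction rules out an $\omega$-limit argument for producing a complete orbit. The device that makes it work, visible above, is to carry out every quantitative estimate against the fixed limit flow $\pi$ and the fixed buffers around $G_{\pi_0}^{T}(B)$ and $B$, to invoke continuity of an individual $\pi_n$ only in the single-$n$ compactness step of the non-emptiness argument, and to leave every remaining passage to the limit to semi-singular convergence. What is then left is routine: the product-metric bookkeeping on $\mathbb{R}\times X$ and checking that Benci's inclusions survive the perturbation.
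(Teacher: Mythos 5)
Your treatment of non-emptiness and of the inclusion $G_{\pi}^{2T}(N)\subset G_{\pi_{n}}^{T}(N)$ is sound and essentially parallels the paper: the paper realizes $A_{\pi_{n}}(N)$ slicewise as the decreasing intersection $\bigcap_{s\ge0}G_{n}(t-s)\pi_{n}s$ of non-empty compact sets, while you push points from slices far in the past forward, extract a limit in the compact $B$, and use closedness and positive invariance of $G_{\pi_{n}}^{T}(N)$ together with continuity of the single flow $\pi_{n}$; both are the same pullback idea and both only need the fixed $n$.

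The genuine gap is in the $\epsilon$-part. Your compactness-and-contradiction argument, even if repaired, only yields: for the given sequence $\{\pi_{n}\}$ there exist $\epsilon>0$ and $n_{1}$ such that the inclusion holds for $n\ge n_{1}$. That $\epsilon$ is produced non-constructively from the flows $\pi_{n_{k}}$ and the sets $A_{\pi_{n_{k}}}(N)$, $G_{\pi_{n_{k}}}^{T}(N)$ themselves, so your closing claim that it is ``built only from $\pi_{0}$, $B$ and $T$'' is not justified; what you prove is $\epsilon=\epsilon(\{\pi_{n}\},\pi_{0},B,T)$. The statement $\epsilon=\epsilon(\pi_{0},B,T)$ is not cosmetic: in section \ref{sub:non-auto} and \ref{sub:rds} the theorem is applied to an arbitrary (typically uncountable) family $\{\pi_{p}\}_{p\in P^{*}}$ satisfying only the uniform $\delta/3$-closeness, with no sequential semi-singular convergence available, and the past-attractor/random-attractor conclusion rests precisely on the fact that one $\epsilon$ works for every member of the family. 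The paper gets this by a direct argument you never invoke: choose $\epsilon$ from the uniform continuity of the unperturbed flow $\pi$ on $N\times[-T,T]$ (possible because $B$ is compact) so that $d(x,\tilde{x})<\epsilon$ gives $d(x\pi t,\tilde{x}\pi t)<\delta/3$, and then the three-term estimate $d(x\pi_{n}t,\tilde{x}\pi_{n}t)\le d(x\pi_{n}t,x\pi t)+d(x\pi t,\tilde{x}\pi t)+d(\tilde{x}\pi t,\tilde{x}\pi_{n}t)<\delta$ together with $U_{\delta}(G_{\pi_{n}}^{T}(N))\subset N$ yields $U_{\epsilon}(G_{\pi_{n}}^{2T}(N))\subset G_{\pi_{n}}^{T}(N)$ for any flow satisfying the hypothesis. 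Tellingly, your contradiction step never uses the $\delta/3$ hypothesis at all. A secondary problem: you apply semi-singular convergence at negative times ($\xi_{\infty}\pi_{0}\mathbb{R}\subset B$, and $P_{2}(z_{k}\pi_{n_{k}}t_{k})$ with $t_{k}$ possibly negative) and at points outside $B$ where no compactness is available; the definition only covers $t_{n}\in\mathbb{R}^{\ge0}$, so these passages need the first-exit-through-$\partial B$ and recursive subsequence devices of Lemma \ref{lem:Gamma_empty}, not a one-line appeal.
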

\begin{rem*}
The theorem shows that $A_{\pi_{n}}(N)$ is a past attractor in the
sense of Rasmussen \cite{Rasmussen2007}.\end{rem*}
\begin{proof}
Let $x\in G_{\pi_{n}}^{T}(N)$ then \[
x\pi_{n}[-T,T]\subset N.\]
 Since $d(x\pi t,x\pi_{n}t)<\delta$ for all $t\in[-T,T]$ we have
\[
x\pi[-T,T]\subset U_{\delta}(N).\]
This implies that $G_{\pi_{n}}^{T}(N)\subset G_{\pi}^{T}(U_{\delta}(N))$
and thus $U_{\delta}(G_{\pi_{n}}^{T}(N))\subset N$ because $U_{\delta}(G_{\pi}^{T}(U_{\delta}(N)))\subset N$

Because $\pi$ is uniformly continuous on $N\times[-T,T]$ we can
choose $\epsilon>0$ such that $d(x,\tilde{x})<\epsilon$ implies
$d(x\pi t,\tilde{x}\pi t)<\frac{\delta}{3}$ for all $x,\tilde{x}\in N$
and $t\in[-T,T]$. Now if $x\in U_{\epsilon}(G_{\pi_{n}}^{2T}(N))$
then there is an $\tilde{x}\in G_{\pi_{n}}^{2T}(N)$ such that $d(x,\tilde{x})<\epsilon$
and \[
\tilde{x}\pi_{n}[-T,T]\subset G_{\pi_{n}}^{T}(N).\]
Furthermore, we have for all $t\in[-T,T]$ \begin{eqnarray*}
d(x\pi_{n}t,\tilde{x}\pi_{n}t) & \le & d(x\pi_{n}t,x\pi t)+d(x\pi t,\tilde{x}\pi t)+d(\tilde{x}\pi t,\tilde{x}\pi_{n}t)\\
 & < & \frac{\delta}{3}+\frac{\delta}{3}+\frac{\delta}{3}=\delta.\end{eqnarray*}
Since $U_{\delta}(G_{\pi_{n}}^{T}(N))\subset N$ we have $x\pi_{n}[-T,T]\subset N$,
i.e. $U_{\epsilon}(G_{\pi_{n}}^{2T}(N))\subset G_{\pi_{n}}^{T}(N)$.
In particular, $U_{\epsilon}(A_{\pi_{n}}(N))\subset G_{\pi_{n}}^{T}(N)$.

Now let $x\in G_{\pi}^{2T}(N)$ then $x\pi[-T,T]\subset G_{\pi}^{T}(N)$.
Because $U_{\delta}(G_{\pi_{n}}^{T}(N))\subset N$ and $d(x\pi t,x\pi_{n}t)<\delta$
for all $t\in[-T,T]$ we have $x\in G_{\pi_{n}}^{T}(N)$. Hence \[
G_{\pi}^{2T}(N)=\mathbb{R}\times G^{2T}(B)\subset G_{\pi_{n}}^{T}(N).\]

Since $G_{n}:=G_{\pi_{n}}^{T}(N)$ is closed the set $G_{n}(t)=G_{\pi_{n}}^{T}(N)\cap\{t\}\times X\subset\{t\}\times B$
is compact and non-empty for each $t\in\mathbb{R}$. Now define\[
A(t)=\bigcap_{t\ge0}G_{n}(-t)\pi_{n}t\subset\{t\}\times B.\]
$G_{n}$ is positive invariant so that this intersection is a decreasing
sequence of non-empty compact sets and thus compact and non-empty
itself. Let \[
A=\bigcup_{t\in\mathbb{R}}A(t).\]
Then $A$ is invariant w.r.t. $N$ and $A(N)\subset A\subset N$ which
implies $A(N)=A\ne\varnothing$.
\end{proof}

\subsection{\label{sub:non-auto}Non-autonomous dynamical systems}

For a general NDS $(\phi,\theta)$ we do not have $P=\mathbb{R}$.
Assume $X$ is locally compact and $(\phi,\theta)$ is a two-sided
process. Denote by $P^{*}$ the set of $\theta$-orbits of $P$. Then
each orbit $\sigma\in P^{*}$ is represented by a $p\in P$. So after
choosing one representative for each orbit we can assume w.l.o.g.
$P^{*}\subset P$. 

Because $(\phi,\theta)$ is two-sided for each $p\in P^{*}$ we have
an invertible skew product flow $\pi_{p}$ with \[
(s,x)\pi_{p}t=(s+t,x\phi^{p\theta s}t).\]
Suppose $B$ is a stable isolating neighborhood of $\pi_{0}$. If
each $\pi_{p}$ is close to $\pi=\tau\times\pi_{0}$, i.e. there is
a sufficiently small $\delta>0$ and a $T>0$ (both independent of
$p$) such that \[
d((s,x)\pi t,(s,x)\pi_{p}t)<\delta\]
for all $t\in[-T,T]$ and $x\in B$ then theorem \ref{thm:finite}
implies \[
G_{p}=G^{T}(\mathbb{R}\times B)\]
is a stable isolating neighborhood for $\pi_{p}$ and their isolated
invariant sets $A_{p}=A_{\pi_{p}}(\mathbb{R}\times B)$ are non-empty.
Now define the set-valued map $D:P\to2^{X}$ by \[
D(q)=\{x\in X\,|\,\mbox{for some \ensuremath{s\in\mathbb{R}}, \ensuremath{p\in P^{*}}\,\ s.t. \ensuremath{(s,x)\in G_{p}}\,\ and }p\theta s=q\}\]
It is an easy exercise to show that $D$ maps into the set of closed
sets and is forward invariant w.r.t. the non-autonomous system $(\phi,\theta)$.
In the same way we can show that there is set-valued map $A$ which
is invariant and \[
A(q)=P_{2}((A_{p}\pi_{p}s)\cap\{s\}\times X)\]
is compact for $q=p\theta s$ and $p\in P^{*}$. Furthermore, the
$\epsilon$ in theorem \ref{thm:finite} only depends on $T$, $B$
and $\pi_{0}$, i.e. \[
U_{\epsilon}(A(q))\subset D(q)\]
which shows that $A$ is a past attractor.

\subsection{\label{sub:rds}Random dynamical systems}

A random dynamical system is an NDS such that $P=\Omega$ is a probability
space, $\theta$ is measurable dynamical system and the map\[
(t,\omega)\mapsto x\phi^{\omega}t\]
is measurable for every $x$ in $X$.

If we show that the sets $D$ and $A$ are random closed sets then
$A$ is a random past attractor. And we get:
\begin{thm*}
Sufficiently small (bounded) noise does not destroy local attractors.
\end{thm*}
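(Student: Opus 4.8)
### Proof proposal

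The plan is to reduce the final theorem (``sufficiently small bounded noise does not destroy local attractors'') entirely to Theorem~\ref{thm:finite} together with the construction of the invariant set-valued maps $D$ and $A$ already carried out in section~\ref{sub:non-auto}, and then to supply the one missing ingredient, namely measurability of those set-valued maps. So the first step is to fix the setting: a random dynamical system is an NDS with $P=\Omega$ a probability space, $\theta$ a measurable metric dynamical system, and $(t,\omega)\mapsto x\phi^{\omega}t$ measurable in $\omega$ for each fixed $x$; ``sufficiently small bounded noise'' means precisely that there are $\delta>0$ and $T>0$, both independent of $\omega$, with $d((s,x)\pi t,(s,x)\pi_{\omega}t)<\delta$ for all $t\in[-T,T]$, all $x\in B$, and almost every $\omega$, where $B$ is the stable isolating block for the unperturbed flow $\pi_{0}$ guaranteed by Theorem~\ref{thm:stable}. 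Under this hypothesis, section~\ref{sub:non-auto} already gives us, for a.e.\ $\omega$, a stable isolating neighborhood $G_{\omega}=G_{\pi_{\omega}}^{T}(\mathbb{R}\times B)$ and a nonempty compact isolated invariant set $A_{\omega}=A_{\pi_{\omega}}(\mathbb{R}\times B)$, and correspondingly the fiber maps $q\mapsto D(q)$, $q\mapsto A(q)$ that are forward invariant resp.\ invariant and satisfy $U_{\epsilon}(A(q))\subset D(q)$ with $\epsilon=\epsilon(\pi_{0},B,T)$ uniform in $q$.

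The second and main step is measurability. I would show $\omega\mapsto D(\omega)$ and $\omega\mapsto A(\omega)$ are random closed sets, i.e.\ that $\{\omega\mid D(\omega)\cap U\neq\varnothing\}$ is measurable for every open $U\subset X$ (equivalently, by local compactness and separability of $X$, that $\omega\mapsto d(x,D(\omega))$ is measurable for each fixed $x$). The key observation is that $D(\omega)$ is \emph{flow-defined}: by construction $x\in G_{\pi_{\omega}}^{T}(\mathbb{R}\times B)$ iff $(0,x)\pi_{\omega}t\in\mathbb{R}\times B$ for all $t\in[-T,T]$, which — using continuity of $t\mapsto x\phi^{\omega}t$ and separability, so that $[-T,T]$ may be replaced by a countable dense set — is a countable intersection of the sets $\{x\mid x\phi^{\omega}t\in B\}$. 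For fixed $t$, $\omega\mapsto$ (characteristic function of that set, evaluated at $x$) is measurable because $(t,\omega)\mapsto x\phi^{\omega}t$ is measurable and $B$ is closed; here one should be slightly careful and either invoke joint measurability of $(\omega,x)\mapsto x\phi^{\omega}t$ (which follows from continuity in $x$ and measurability in $\omega$ for a Carathéodory map on a Polish space) or argue fiberwise at each $x$. Then $A(\omega)$ is obtained from $G_{\omega}$ by the decreasing-intersection formula $A(t,\omega)=\bigcap_{r\ge0}G_{\omega}(-r)\pi_{\omega}r$ of the proof of Theorem~\ref{thm:finite}; a countable decreasing intersection of random compact sets is a random compact set, so $A$ is a random closed set too. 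Finally $A$ is $\theta$-equivariant and compact-fibered by Theorem~\ref{thm:finite}, and $U_{\epsilon}(A(\omega))\subset D(\omega)$ with $D$ forward invariant, which is exactly the definition of a random past attractor in the sense of \cite{Rasmussen2007}; hence the local attractor $S$ survives, with $A(\omega)$ converging upper semicontinuously to $\mathbb{R}\times S$ as the noise size $\delta\to0$ (this last point follows from $A(\omega)\subset G_{\pi_{\omega}}^{T}(\mathbb{R}\times B)\subset U_{\delta}$-neighborhood of $G_{\pi_{0}}^{T}(\mathbb{R}\times B)$, which shrinks to $\mathbb{R}\times S$ as $\delta,1/T\to0$).

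The step I expect to be the real obstacle is not the dynamics — that is fully handled by Theorem~\ref{thm:finite}, which is the whole point of having proved the finite-dimensional case with flow-defined neighborhoods — but rather pinning down the measurability cleanly: one must make sure the ``closeness'' hypothesis is imposed in a form that holds $\omega$-almost surely with $\omega$-independent constants $\delta,T$, handle the null set on which it may fail (discard it, or note the maps can be completed to all of $\Omega$), and verify that the cocycle $x\phi^{\omega}t$ is jointly measurable enough to push measurability through the countable intersections and through $\pi_{\omega}$. None of this requires regularity of the support of the noise, which is the advertised improvement over \cite{Ruelle1981}; I would end by remarking that the argument is insensitive to how the bounded noise is modeled, since only the uniform semi-singular-type closeness of the fiber flows to $\pi_{0}$ on the compact block $B$ is used.
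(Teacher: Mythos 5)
Your proposal is correct and follows essentially the same route as the paper: the dynamical content is delegated to Theorem~\ref{thm:finite} via the construction of $D$ and $A$ in section~\ref{sub:non-auto}, and the only new ingredient is measurability, obtained from the flow-defined description of $G_{\omega}^{T}(\mathbb{R}\times B)$ as a countable intersection over rational times. The paper phrases this intersection in terms of images $B\phi^{\varpi\theta(-t)}t$ of the deterministic compact $B$ under the invertible cocycle (a decreasing intersection of random compact sets), whereas you use preimages of the closed set $B$ together with joint measurability of the cocycle, but this is only a cosmetic variation of the same argument.
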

In particular, if the global attractor of the unperturbed system has
several disjoint local attractors ({}``sinks'') then the same holds
for the global attractor of the perturbed system. This is a complementary
result to Crauel, Flandoli - {}``Additive Noise Destroys a Pitchfork
Bifurcation'' \cite{Crauel1998} where they show that the {}``perturbed''
global attractor is a single point with probability $1$ when {}``small''
white noise is applied.

It remains to show that $D$ and $A$ are closed random sets. It suffices
to show that $D$ is random compact. Each $G_{\omega}^{T}(N)$ is
flow-defined, i.e.\[
G_{\omega}^{T}(N)=\bigcap_{t\in[-T,T]}(\mathbb{R}\times B)\pi_{\omega}t.\]
Because the $\pi_{\omega}$ are skew products, for each $s$-slice
we have \begin{eqnarray*}
G_{\omega}^{T}(N)(s) & = & G_{\omega}^{T}(N)\cap(\{s\}\times X)\\
 & = & \{s\}\times\bigcap_{t\in[-T,T]}(B\phi^{\omega\theta(s-t)}t).\end{eqnarray*}
So that for $\varpi=\omega\theta s$ \begin{eqnarray*}
D(\varpi) & = & \bigcap_{t\in[-T,T]}(B\phi^{\varpi\theta(-t)}t)\\
 & = & \bigcap_{t\in[-T,T]\cap\mathbb{Q}}(B\phi^{\varpi\theta(-t)}t)\end{eqnarray*}
Because $B$ is a deterministic compact set, $\phi$ invertible and
$D$ a decreasing intersection of compact sets, $D$ is a random compact
set. In particular, we see that the choice of the representative $\omega\in\Omega^{*}=P^{*}$
does not matter and each $D(\varpi)$ is defined in the same way.

\section{\label{sec:Inf}Infinite dimensional case}

In this section we show that the index pair continuation in {\cite[Theorem 12.3]{Rybakowski1987}
still holds if we replace the admissibility arguments by semi-admissibility
and semi-singular convergence arguments. We are not able to show that
two index pairs of the perturbed (non-autonomous) system are Conley
equivalent, i.e. the quotients have the same homotopy type in the
category of pointed space. So the Conley index is not well-defined.
Even worse, because the index pair is unbounded the Wa{\.z}ewski
principle does not imply that the invariant set is non-empty if the
quotient space is non-trivial. Nevertheless, we can use the continuation
to show that a stable isolating neighborhood continues to a stable
isolating neighborhood, i.e. the exit set of the perturbed system
is empty.
\begin{rem*}
As mentioned in the introduction appropriate versions of theorem \ref{thm:cont}
and corollary \ref{cor:cont} hold also for stable isolating neighborhoods
in the discrete time setting using the ideas of \cite{Kell2011c}.
The definitions of semi-(singular-)admissibility and semi-singular
convergence can be given similarly. 

Furthermore, for both, continuous and discrete time, instead of a
semi-admis\-sibility of the perturbed maps we could use {}``weak
properness'' of the space $X$, i.e. bounded $\delta$-neighborhoods
of compact sets are weakly compact, to show that the invariant set
is non-empty and a weak attractor (see \cite{Kell2011c} for definition).\end{rem*}
\begin{defn}
[Semi-singular admissiblity] Let $\pi_{n}$ be semiflows on $\mathbb{R}\times X$
and $\pi_{0}$ be a semiflow on $X$ such that $\pi_{n}\overset{\mbox{\mbox{\tiny ssing}}}{\longrightarrow}\pi_{0}$.
A closed subset $N$ of $\mathbb{R}\times X$ is called $\{\pi_{n}\}$-semi-singular-admissible
(ss-admissible) if the following holds:
\begin{itemize}
\item Let $(s_{n})_{n\in\mathbb{N}}$ be a sequence in $\mathbb{R}$. If
$x_{n}\in X$ and $t_{n}\in\mathbb{R}^{+}$ with $0\le t_{n}<\omega_{(s_{n},x_{n})}^{\pi_{n}}$
are two sequences such that $t_{n}\to\infty$ as $n\to\infty$ and
$(s_{n},x_{n})\pi_{n}[0,t_{n}]\subset N$ then the sequence $\{P_{2}((s_{n},x_{n})\pi_{n}t_{n})\}_{n\in\mathbb{N}}$
of the projected endpoints has a convergent subsequence.
\end{itemize}
If, in addition, each $\pi_{n}$ does not explode in $N$, i.e. \[
(s,x)\pi_{n}[0,\omega_{(s,x)}^{\pi_{n}})\subset N\:\mbox{ implies }\:\omega_{(s,x)}^{\pi_{n}}=\infty\]
then we say that $N$ is strongly $\{\pi_{n}\}$-ss-admissible.
\end{defn}
The definition means that $\pi_{n}$ should not converge too badly
to $\pi_{0}$. It is a property of the sequence $\{\pi_{n}\}_{n\in\mathbb{N}}$
and does not tell anything about $\pi_{0}$ alone and neither about
a single $\pi_{n}$. An adjusted admissibility definition of a single
semiflow $\pi$ for our setting is given as follows.

\begin{defn}
[Semi-Admissiblity] Let $\pi$ be a semiflow on $\mathbb{R}\times X$.
A closed subset $N$ of $\mathbb{R}\times X$ is called $\pi$-semi-admissible
if the following holds:
\begin{itemize}
\item Let $(s_{n})_{n\in\mathbb{N}}$ be a sequence in $\mathbb{R}$. If
$x_{n}\in X$ and $t_{n}\in\mathbb{R}^{+}$ with $0\le t_{n}<\omega_{(s_{n},x_{n})}$
are two sequences such that $t_{n}\to\infty$ as $n\to\infty$ and
$\{s_{n}+t_{n}\}_{n\in\mathbb{N}}$ is precompact, $(s_{n},x_{n})\pi[0,t_{n}]\subset N$
then the sequence of endpoints $\{(s_{n},x_{n})\pi t_{n}\}_{n\in\mathbb{N}}$
has a convergent subsequence.
\end{itemize}
If, in addition, $\pi$ does not explode in $N$, i.e. \[
(s,x)\pi[0,\omega_{(s,x)})\subset N\:\mbox{ implies }\:\omega_{(s,x)}=\infty\]
then we say that $N$ is strongly $\pi$-semi-admissible.\end{defn}
\begin{rem*}
For a semiflow $\pi_{0}$ on $X$ the usual strong $\pi_{0}$-admissibility
of a closed subset $N$ is defined without the sequence $s_{n}$ (see
{\cite[4.1]{Rybakowski1987}}).
\end{rem*}
Suppose $B$ is closed and strongly $\pi_{0}$-admissible, $B\subset U$
for some open $U\subset X$ with strong $\pi_{0}$-admissible closure
and $d(B,\partial U)\ge\delta$. We say that $\pi_{n}\overset{\mbox{\mbox{\tiny ssing}}}{\longrightarrow}\pi_{0}$
converges uniformly if $d(P_{2}((s,x))\pi_{n}t^{*},x\pi_{0}t^{*})<\epsilon_{n,t}\to0$
for $t>0$ and all $(s,x)\in\mathbb{R}\times U$ with $t<\min\{\omega_{x}^{\pi_{0}},\omega_{(s,x)}^{\pi_{n}}\}$. 
\begin{rem*}
Uniform convergence of $\pi_{n}\overset{\mbox{\mbox{\tiny ssing}}}{\longrightarrow}\pi_{0}$
is similar to the assumption of Benci \cite{Benci1991} used to prove
his continuation theorem for the Conley index. \end{rem*}
\begin{lem}
Suppose $\pi_{n}\overset{\mbox{\mbox{\tiny ssing}}}{\longrightarrow}\pi_{0}$
uniformly and $\pi_{n}$ does not explode in $N=\mathbb{R}\times B$.
Then $N$ is strongly $\{\pi_{n}\}$-admissible.\end{lem}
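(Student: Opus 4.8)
The plan is to transfer long orbit pieces of $\pi_{n}$ lying in $B$ to nearby orbit pieces of $\pi_{0}$ lying in $\operatorname{cl}U$, to apply the $\pi_{0}$-admissibility of $\operatorname{cl}U$ to those, and to transfer the resulting convergent subsequence back; a diagonal choice of indices reconciles the fact that the transfer is only controlled on a fixed finite time window once $n$ is large. The ``strongly'' clause needs no work, since the hypothesis that $\pi_{n}$ does not explode in $N$ is exactly the extra requirement in the definition of strong $\{\pi_{n}\}$-ss-admissibility.

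So let $(s_{n})\subset\mathbb{R}$, $x_{n}\in X$ and $t_{n}\to\infty$ with $0\le t_{n}<\omega_{(s_{n},x_{n})}^{\pi_{n}}$ and $(s_{n},x_{n})\pi_{n}[0,t_{n}]\subset N=\mathbb{R}\times B$, and write $e_{n}=P_{2}((s_{n},x_{n})\pi_{n}t_{n})\in B$. The key is the following transfer step: for each $m\in\mathbb{N}$ there is an $n_{0}(m)$ so that for all $n\ge n_{0}(m)$ with $t_{n}>m$, putting $(r,w)=(s_{n},x_{n})\pi_{n}(t_{n}-m)$ (hence $w\in B$), the solution $w\pi_{0}[0,m]$ is defined, lies in $U$, and satisfies $d(w\pi_{0}m,e_{n})\le\epsilon_{n}(m)$, where $\epsilon_{n}(m)\to0$ as $n\to\infty$ is the modulus of the uniform semi-singular convergence on $[0,m]$ over base points in $\mathbb{R}\times U$ (note $\omega_{(r,w)}^{\pi_{n}}>m$, because the time-slice of the domain is half-open and $(r,w)\pi_{n}m$ is defined). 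I would prove this by a continuation argument: let $[0,\tau)$ be the maximal interval on which $w\pi_{0}$ is defined and stays in $U$; it is nonempty since $w\in B\subset U$ and $U$ is open. While $t<\min\{\tau,m\}$, both $w\pi_{0}t$ and $(r,w)\pi_{n}t$ are defined, the latter lies in $B$, and uniform semi-singular convergence gives $d(w\pi_{0}t,B)\le\epsilon_{n}(m)<\delta$ once $n\ge n_{0}(m)$, so $w\pi_{0}t\in U_{\delta}(B)\subset U$. If $\tau<m$, then either $\tau<\omega_{w}^{\pi_{0}}$, in which case continuity gives $d(w\pi_{0}\tau,B)\le\epsilon_{n}(m)<\delta$, hence $w\pi_{0}\tau\in U_{\delta}(B)\subset U$, so the solution can be continued inside the open set $U$ past $\tau$, contradicting maximality; or $\tau=\omega_{w}^{\pi_{0}}$, in which case $w\pi_{0}[0,\omega_{w}^{\pi_{0}})\subset U\subset\operatorname{cl}U$ and strong $\pi_{0}$-admissibility of $\operatorname{cl}U$ (no explosion in $\operatorname{cl}U$) forces $\omega_{w}^{\pi_{0}}=\infty$, contradicting $\tau<m$. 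Hence $\tau\ge m$, so $\omega_{w}^{\pi_{0}}>m$, the point $w\pi_{0}m$ is defined, $w\pi_{0}[0,m]\subset U$, and $d(w\pi_{0}m,e_{n})=d(w\pi_{0}m,P_{2}((r,w)\pi_{n}m))\le\epsilon_{n}(m)$.

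With the transfer step in hand I would choose, for each $k\in\mathbb{N}$, an index $n(k)$, strictly increasing in $k$, with $n(k)\ge n_{0}(k)$, $t_{n(k)}>k$, and $\epsilon_{n(k)}(k)<1/k$ (possible since $t_{n}\to\infty$ and $\epsilon_{n}(k)\to0$ for fixed $k$), and set $p_{k}=P_{2}((s_{n(k)},x_{n(k)})\pi_{n(k)}(t_{n(k)}-k))\in B$. By the transfer step, $p_{k}\pi_{0}[0,k]\subset\operatorname{cl}U$ with $k\to\infty$, so $\pi_{0}$-admissibility of $\operatorname{cl}U$ produces a subsequence with $p_{k_{j}}\pi_{0}k_{j}\to q$ for some $q\in X$. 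Since $d(p_{k}\pi_{0}k,e_{n(k)})\le\epsilon_{n(k)}(k)<1/k\to0$, we get $e_{n(k_{j})}\to q$, and as $k\mapsto n(k)$ is strictly increasing, $(e_{n(k_{j})})_{j}$ is a subsequence of $(e_{n})_{n}$. Thus the projected endpoints $\{P_{2}((s_{n},x_{n})\pi_{n}t_{n})\}$ have a convergent subsequence, which is exactly $\{\pi_{n}\}$-ss-admissibility of $N$; together with the no-explosion hypothesis this gives strong $\{\pi_{n}\}$-ss-admissibility.

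The main obstacle is the transfer step, namely guaranteeing that the $\pi_{0}$-solution issued from a point of $B$ exists and remains in $U$ for the whole time $m$. This is where all three hypotheses must be used together: the buffer $d(B,\partial U)\ge\delta$ keeps the $\pi_{0}$-orbit trapped once it is $\epsilon_{n}(m)$-close to the $B$-valued $\pi_{n}$-orbit, the uniform (not merely pointwise) nature of the convergence makes $\epsilon_{n}(m)$ independent of the base point and of the particular orbit, and the no-explosion clause in the strong admissibility of $\operatorname{cl}U$ rules out finite-time blow-up while the orbit is still inside $\operatorname{cl}U$. Everything else is bookkeeping.
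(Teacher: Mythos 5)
Your proof is correct and follows essentially the same route as the paper: shadow the tail of the $\pi_{n}$-orbit by a $\pi_{0}$-orbit that stays in $U$ thanks to the $\delta$-buffer and the uniform convergence, apply strong $\pi_{0}$-admissibility of $\operatorname{cl}U$ to those orbit pieces, and transfer the convergent subsequence of endpoints back. The only difference is presentational: the paper picks intermediate times $r_{n}$ with $t_{n}-r_{n}\to\infty$ directly, whereas you fix a window $m$ and diagonalize explicitly (and spell out the continuation/no-blow-up step that the paper leaves implicit), which amounts to the same argument.
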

\begin{proof}
Let $(s_{n},x_{n})\in N$ and $t_{n}\to\infty$ be a sequence fulfilling
the assumption of the definition of ss-admissibility. Because of uniform
convergence and $d(N,\partial U)\ge\delta$, there is an $N(t)$ for
each such that for all $n\ge N(t)$\[
x\pi_{0}[0,t]\subset U\]
whenever $(s,x)\in N$ with $(s,x)\pi_{n}t\subset N$.

Thus there is a sequence $r_{n}\ge0$ with $t_{n}-r_{n}\to\infty$
such that $(s_{n},x_{n})\pi_{n}[0,t_{n}]\subset N$ implies that \[
y_{n}\pi_{0}[0,t_{n}-r_{n}]\subset U\]
for $(\tilde{s}_{n},y_{n})=(s_{n},x_{n})\pi_{n}r_{n}$. Furthermore,
we can choose $r_{n}$ such that $\delta_{n}=\max_{t\in[0,t_{n}-r_{n}]}\epsilon_{n,t}\to0$
and therefore \[
d(P_{2}((s_{n},x_{n})\pi_{n}t_{n})),y_{n}\pi(t_{n}-r_{n}))\le\delta_{n}.\]

Because the closure of $U$ is strongly $\pi_{0}$-admissible, the
sequence of endpoints $\{y_{n}\pi(t_{n}-r_{n})\}_{n\in\mathbb{N}}$
has a convergent subsequence which implies that $\{P_{2}((s_{n},x_{n})\pi_{n})\}_{n\in\mathbb{N}}$
has a convergent subsequence, in particular the limit point is in
$N$.\end{proof}
\begin{defn}
[index pair]Let $\tilde{N}$ be a closed isolating neighborhood for
$K=A(\tilde{N})$. A pair $(N,L)$ is called an index pair in $\tilde{N}$
(w.r.t. $\pi$) if $L\subset N\subset\tilde{N}$ and the following
holds
\begin{enumerate}
\item $K\subset\operatorname{int}(N\backslash L)$ and $K$ is the maximal
invariant set in $\operatorname{cl}(N\backslash L)$
\item if $x\in L$ and $x\pi[0,\epsilon]\subset N$ then $x\pi[0,\epsilon]\subset L$,
i.e. $L$ is positive $\pi$-invariant relative to $N$
\item if $x\in N$ and $x\pi[0,\omega_{x})\not\subset N$ then there is
a $0\le t<\omega_{x}$ such that $x\pi t\in L$, i.e. $L$ is an exit
ramp for $N$
\end{enumerate}
\end{defn}
The following theorem is a variant of {\cite[I-12.3-12.7]{Rybakowski1987}}
(see also \cite{Carbinatto2002}) by replacing all admissibility arguments
with semi-singular-admissibility argument and replacing $g^{-}:\tilde{N}\to\mathbb{R}^{\ge0}$
by a lifted version $\tilde{g}^{-}:\mathbb{R}\times\tilde{N}\to\mathbb{R}^{\ge0}$
defined by $\tilde{g}^{-}(s,x)=g^{-}(x)$.
\begin{thm}
\label{thm:cont}Let $\pi_{0}$ be a local semiflow on $X$ and $\pi_{n}$,
$n\in\mathbb{N}$, be local semiflows on $\mathbb{R}\times X$. Suppose
$\tilde{N}$ is a closed set in $X$ and strongly $\pi_{0}$-admissible.
Moreover, assume $\pi_{n}\overset{\mbox{\mbox{\tiny ssing}}}{\longrightarrow}\pi_{0}$
as $n\to\infty$ and $\mathbb{R}\times\tilde{N}$ is strongly $\{\pi_{n_{m}}\}$-semi-admissible
for all subsequences $\{\pi_{n_{m}}\}_{m\in\mathbb{N}}$. Set $K_{n}=A_{\pi_{n}}(\mathbb{R}\times\tilde{N})$,
$K_{0}=A_{\pi}(\mathbb{R}\times\tilde{N})$ and assume $K=A_{\pi_{0}}(\tilde{N})\subset\operatorname{int}\tilde{N}$.
Then there exist an $n_{0}\in\mathbb{N}$ and two closed set $N$
and $N^{'}$ such that for each $n\in\{n\ge n_{0}\}\cup\{0\}$ there
are two index pair $\langle N_{n},L_{n}\rangle$ and $\langle N_{n}^{'},L_{n}^{'}\rangle$
with the following properties:
\begin{itemize}
\item $K_{n}\subset N^{'}\subset\operatorname{int}(\mathbb{R}\times N)\subset\operatorname{int}(\mathbb{R}\times\tilde{N})$,
$N_{0}^{'}=\mathbb{R}\times N^{'}$ and $N_{0}=\mathbb{R}\times N$
\item $\langle N_{n}^{'},L_{n}^{'}\rangle$ (resp. \textup{$\langle N_{n},L_{n}\rangle$)
is an index pair in $N_{0}^{'}$ (resp. in $N_{0}$) w.r.t $\pi_{n}$
(resp. w.r.t $\pi=\tau\times\pi_{0}$ if $n=0$)}
\item $N_{n}^{'}\subset N_{0}^{'}\subset N_{n}\subset N_{0}$ and $L_{n}^{'}\subset L_{0}^{'}\subset L_{n}\subset L_{0}$.
\end{itemize}
\end{thm}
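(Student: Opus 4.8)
The plan is to follow the skeleton of the classical construction of index pairs (Rybakowski \cite{Rybakowski1987}, I-12.3--12.7, cf.\ \cite{Carbinatto2002}) and carry it out uniformly over the sequence $\{\pi_n\}$, with two systematic replacements: every compactness/admissibility argument on a subset of $X$ is replaced either by strong $\pi_0$-admissibility of $\tilde N$ (for the $n=0$ case) or by strong $\{\pi_{n_m}\}$-semi-admissibility of $\mathbb R\times\tilde N$ (for the perturbed case), and the Lyapunov-type exit-time function $g^-\colon\tilde N\to\mathbb R^{\ge0}$ is replaced by its lift $\tilde g^-(s,x)=g^-(x)$, which is legitimate precisely because the unperturbed skew product $\pi=\tau\times\pi_0$ is translation-invariant in the base. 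The first step is to recall/reprove, for $\pi_0$ on $X$, the existence of the usual pair of shrinking isolating neighborhoods $N'\subset N$ of $K$ inside $\tilde N$ together with a continuous function $g^-$ measuring exit behavior, using that $K\subset\operatorname{int}\tilde N$ and $\tilde N$ is strongly $\pi_0$-admissible; then define $N_0=\mathbb R\times N$, $N_0'=\mathbb R\times N'$ and observe these are isolating neighborhoods for $K_0=A_\pi(\mathbb R\times\tilde N)=\mathbb R\times K$ with respect to $\pi$, with index pairs $\langle N_0,L_0\rangle$, $\langle N_0',L_0'\rangle$ obtained from the lifted $\tilde g^-$.

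**Key steps in order.** (1) Build the unperturbed data $N',N,g^-$ on $X$ and their lifts. (2) Show that for all sufficiently large $n$ the set $\mathbb R\times\tilde N$ is a $\pi_n$-isolating neighborhood of $K_n=A_{\pi_n}(\mathbb R\times\tilde N)$ with $K_n\subset\operatorname{int}(\mathbb R\times N')$; this is where semi-singular convergence enters, via the standard argument (as in Lemma~\ref{lem:Gamma_empty}): if some $(s_n,x_n)\pi_n[0,t_n]$ with $t_n\to\infty$ leaves $\operatorname{int}(\mathbb R\times N')$ but stays in $\mathbb R\times\tilde N$, a diagonal extraction using strong $\{\pi_{n_m}\}$-semi-admissibility and $\pi_n\overset{\text{\tiny ssing}}{\to}\pi_0$ produces a full $\pi_0$-orbit in $\tilde N$ meeting $\partial N'$, contradicting $K\subset\operatorname{int}N'$. (3) Using $\tilde g^-$ and the $\pi_n$-flow, define $L_n'$ as the appropriate sublevel/exit-time set inside $N_0'$ and $N_n'$ as the portion of $N_0'$ whose forward $\pi_n$-orbit reaches $L_n'$ before exiting (the usual ``$N_n'=\{x\in N_0': x\pi_n[0,t]\subset N_0'$ for some $t$ with $x\pi_n t\in L_n'\}$''-type definition), and similarly $\langle N_n,L_n\rangle$ in $N_0$; verify the three index-pair axioms for $\pi_n$. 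Axiom (1) is step~(2); axiom (2) (positive invariance of $L_n$ relative to $N_n$) is formal from the definition; axiom (3) (exit ramp) needs the no-explosion hypothesis plus the fact that orbits leaving $N_0'$ do so through the region where $\tilde g^-$ forces them into $L_n'$, again using convergence to control $g^-$ along $\pi_n$-orbits. (4) Check the nesting $N_n'\subset N_0'\subset N_n\subset N_0$ and $L_n'\subset L_0'\subset L_n\subset L_0$; this is the reason for choosing $N'\subset N$ in step~(1) with a definite gap, so that the $\pi_n$-defined sets, being $\delta/3$-perturbations of the $\pi$-defined ones on the relevant compact time windows, stay sandwiched.

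**Main obstacle.** The delicate point is step~(2)--(3): running the Rybakowski construction requires continuity and properness of the exit-time function along the \emph{perturbed} flows $\pi_n$, and since each $N_n',N_n$ is unbounded (only the $X$-slice is controlled), one cannot simply invoke compactness of $\mathbb R\times\tilde N$. Everything must be routed through the $X$-projection: $g^-$ lives on $X$, the admissibility hypothesis is about $P_2$-projected endpoints, and semi-singular convergence only controls $P_2((s_n,x_n)\pi_n t_n)$. So the crux is to phrase each limiting argument so that only $t$-slices and projected endpoints appear, never the full unbounded orbit — exactly as done in the proof of Lemma~\ref{lem:Gamma_empty}, where the conclusion $\tilde z^0\in A_{\pi_0}^-(B)$ was extracted slice-by-slice via a diagonal sequence. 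A secondary nuisance is that, as noted in the introductory remarks, two index pairs of a perturbed $\pi_n$ need \emph{not} be Conley-equivalent and the pair is unbounded, so one must be careful to claim only what is true: the existence of the nested index pairs above, \emph{not} well-definedness of an index; the statement is deliberately formulated to avoid that pitfall, and the proof inherits the same restraint.
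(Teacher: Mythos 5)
Your overall strategy coincides with the paper's: follow Rybakowski's index-pair continuation (I\,-12.3--12.7), replace each admissibility argument by strong $\pi_{0}$-admissibility resp.\ strong $\{\pi_{n}\}$-semi-admissibility of \emph{projected} endpoints, lift $g^{-}$ to $\tilde{g}^{-}(s,x)=g^{-}(x)$, and drive the limiting arguments by the diagonal extraction of Lemma \ref{lem:Gamma_empty}; your ``main obstacle'' paragraph names the right crux. However, two steps of your sketch would not survive being written out. First, the construction in your step (3) is not the one that works: if $N_{n}^{'}$ is ``the portion of $N_{0}^{'}$ whose forward $\pi_{n}$-orbit reaches $L_{n}^{'}$ before exiting'', then no point of $K_{n}$ lies in $N_{n}^{'}$ (orbits inside the invariant set never reach an exit set), so axiom (1) of an index pair fails at once. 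The construction actually used goes the other way around: with $V(a)=\{g^{-}<a\}$, $N=\operatorname{cl}V(a_{0})$, $N^{'}=\operatorname{cl}V(\epsilon)$, one sets $N_{n}(\epsilon)$ to be the intersection of $\mathbb{R}\times N$ with the closure of the set of points \emph{reachable} under $\pi_{n}$ from $\mathbb{R}\times V(\epsilon)$ along segments staying in the ambient neighborhood; relative positive invariance and $K_{n}\subset\mathbb{R}\times V(\epsilon)\subset N_{n}(\epsilon)$ are then formal, and the substantive point is that for small $\epsilon$ and large $n$ this set avoids $\mathbb{R}\times\partial N$ (the ss-admissibility/ss-convergence contradiction), hence is genuinely positive $\pi_{n}$-invariant, so the exit sets are empty. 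Note also that the paper proves the theorem only for $\tilde{N}$ positive $\pi_{0}$-invariant (it says so explicitly): this is what allows the limit point produced by semi-admissibility, which a priori lies only in $A_{\pi_{0}}^{-}(\tilde{N})$, to be identified with a point of $K$. Your step (2) claims a full $\pi_{0}$-orbit in the general setting, which semi-admissibility alone (it controls only projected endpoints of long forward segments) does not deliver; a proof with genuinely nontrivial $L_{n}$ would require additional work that neither your sketch nor the paper's written proof supplies.

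Second, your step (4) justifies the nesting $N_{n}^{'}\subset N_{0}^{'}\subset N_{n}\subset N_{0}$ by a ``$\delta/3$-perturbation on compact time windows'' estimate. That argument belongs to the finite-dimensional Theorem \ref{thm:finite}: the hypotheses of Theorem \ref{thm:cont} contain no uniform closeness bound (only semi-singular convergence along sequences) and no local compactness, so there is nothing to make such an estimate available. In the correct construction the nesting is free: $N_{n}(\epsilon)$ contains $\mathbb{R}\times V(\epsilon)$ (take $t=0$ in its definition) and, being closed, contains $\mathbb{R}\times\operatorname{cl}V(\epsilon)=N_{0}^{'}$, while $N_{n}(\epsilon)\subset\mathbb{R}\times N=N_{0}$ by definition; repeating the construction inside $N^{'}$ yields $N_{n}^{'}\subset N_{0}^{'}$, giving the full chain without any quantitative closeness of $\pi_{n}$ to $\pi$.
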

\begin{proof}
Since the proof will follow the original one almost completely and
we are only interested in attractors, we only show a proof in case
$\tilde{N}$ is positive $\pi_{0}$-invariant. In particular, we will
assume that $\tilde{N}$ is an isolating block with empty exit set
defined via the function $g^{-}$. For $a>0$ define \[
V(a)=\{x\in\tilde{U}\,|\, g^{-}(x)<a\}.\]

Then there is an $a_{0}>0$ such that $N:=\operatorname{cl}V(a_{0})\subset\tilde{U}$.
And similar to {\cite[I-4.5]{Rybakowski1987}} we can show that for
$0<\epsilon\le a_{0}$ and all $n\ge n_{0}(\epsilon)$ \[
K_{n}\subset\mathbb{R}\times V(\epsilon).\]

Define \[
N_{n}(\epsilon)=(\mathbb{R}\times N)\cap\operatorname{cl}\{\tilde{y}\,|\,\mbox{\ensuremath{\exists\tilde{x}\in\mathbb{R}\times V(\epsilon)}, \ensuremath{t\ge0}\,\ s.t.\,\ \ensuremath{\tilde{x}\pi_{n}[0,t]\subset\mathbb{R}\times\tilde{U}\,}and \ensuremath{\tilde{x}\pi_{n}t=\tilde{y}}}\}.\]
Following the proof of {\cite[I-12.5]{Rybakowski1987}} we can show
that $N_{n}(\epsilon)$ satisfies the following properties for $n\ge n_{0}(\epsilon)$
\begin{itemize}
\item $x\in N_{n}(\epsilon)$ and $x\pi_{n}[0,t]\subset\mathbb{R}\times N$
implies $x\pi_{n}t\in N_{n}(\epsilon)$
\item $K_{n}\subset\mathbb{R}\times V(\epsilon)\subset N_{n}(\epsilon)$
\end{itemize}
We claim that for small $\epsilon_{0}>0$ whenever $\epsilon\le\epsilon_{0}$
and $n\ge n_{0}(\epsilon)$ then $N_{n}(\epsilon)$ is positive $\pi_{n}$-invariant.
If this is not true then there is a sequence $\epsilon_{m}\to0$ and
\[
y_{m}=(s_{m},z_{m})\in N_{n_{m}}(\epsilon_{m})\cap(\mathbb{R}\times\partial N).\]
 By definition of $N_{n_{m}}(\epsilon_{m})$ there is a sequence $\tilde{y}_{m}\in X$,
$x_{m}\in\mathbb{R}\times V(\epsilon_{m})$ and $t_{m}\ge0$ such
that $d(y_{m},\tilde{y}_{m})<2^{-m}$, $x_{m}\pi_{n_{m}}[0,t_{m}]\subset\tilde{U}$
and $\tilde{y}_{m}=x_{m}\pi_{n_{m}}t_{m}$. Because $g^{-}(x_{m})\to0$
and $A_{f}^{-}(B)=A_{f}(B)$ we can assume w.l.o.g. that $x_{m}\to x_{0}\in A_{f}(B)$.
Admissibility and $\pi_{n_{m}}\overset{\mbox{\mbox{\tiny ssing}}}{\longrightarrow}\pi_{0}$
imply the sequence $\{P_{2}(x_{m}\pi_{n_{m}}t_{m})\}_{m\in\mathbb{N}}$
has a convergent subsequence and w.l.o.g. $P_{2}(\tilde{y}_{m})=P_{2}(x_{m}\pi_{n_{m}}t_{m})\to z_{0}\in A_{f}^{-}(\tilde{N})=A_{f}(\tilde{N})\subset\operatorname{int}N$
(see proof of lemma \ref{lem:Gamma_empty}) and thus $P_{2}(y_{m})\to z_{0}$.
Since $P_{2}(z_{m})\in\partial N$ we must have $y_{0}\in\partial N$,
but this is a contradiction since $\partial N\cap A_{\pi_{0}}(\tilde{N})=\varnothing$.

Set $N^{'}=\operatorname{cl}V(\epsilon)$ which is also positive $\pi_{0}$-invariant.
Applying the arguments above for $N'$ we get another positive $\pi_{n}$-invariant
$N_{n}^{'}\subset\mathbb{R}\times N^{'}$. So we have \[
N_{n}^{'}\subset\mathbb{R}\times N^{'}\subset N_{n}\subset\mathbb{R}\times N.\]
 Positive invariance and $K_{n}\subset\mathbb{R}\times V(\delta)\subset N_{n}^{'}\subset N_{n}$
for some $\delta>0$ implies that $N_{n}$ and $N_{n}^{'}$ are index
pairs.\end{proof}
\begin{cor}
\label{cor:cont}If, in addition to the assumption of the theorem,
we assume that each $\pi_{n}$ is skew product flows and $\tilde{N}$
is a stable isolating neighborhood for $\pi_{0}$ and $N_{0}=\mathbb{R}\times N$
from the theorem is strongly $\pi_{n}$-semi-admissible then $K_{n}\ne\varnothing$
and each $t$-slice $K_{n}\cap\{t\}\times X$ is non-empty and compact.
Furthermore, there is an $n_{0}=n_{0}(\tilde{N},\pi_{0})$ and a $\delta=\delta(\tilde{N},\pi_{0})>0$
such that for all $n\ge n_{0}$ \[
U_{\delta}(K_{n})\subset N_{n},\]
i.e. $K_{n}$ is a past attractor in the sense of Rasmussen \cite{Rasmussen2007}.
And for the sequence $(K_{n})_{n\in\mathbb{N}}$ we have \[
\lim_{n\to\infty}\sup_{y\in K_{n}}\inf_{x\in K}d(P_{2}y,x)=0,\]
i.e. $K_{n}$ is upper-semicontinuous {}``at $K_{0}$'' as $n\to\infty$.\end{cor}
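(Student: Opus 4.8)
The plan is to re-run the proof of Theorem~\ref{thm:finite}, replacing every use of compactness of $B$ by the (semi-singular-)admissibility hypotheses, with the index-pair continuation of Theorem~\ref{thm:cont} as the starting point. The first step is to observe that, $\tilde N$ being a \emph{stable} isolating neighborhood, it is positive $\pi_0$-invariant, so $\mathbb{R}\times N$ has empty exit ramp for $\pi=\tau\times\pi_0$; hence $L_0=\varnothing$, and by the nesting $L_n\subset L_0$ of Theorem~\ref{thm:cont} also $L_n=\varnothing$ for $n\ge n_0$. Thus each $N_n$ is a closed, positive $\pi_n$-invariant neighborhood with $K_n\subset\operatorname{int}N_n$ and $K_n=A_{\pi_n}(N_n)=A_{\pi_n}(\mathbb{R}\times\tilde N)$. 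I would also extract from the construction in Theorem~\ref{thm:cont} a fixed open set $V\subset X$ (the $V(\delta)$ appearing at the end of that proof, with $\mathbb{R}\times V(\delta)\subset N_n^{'}\subset N_n$), depending only on $\tilde N$ and $\pi_0$, with $K\subset V$ and $\mathbb{R}\times V\subset N_n$ for all $n\ge n_0$ and for $n=0$.

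Next I would show $K_n\ne\varnothing$ with compact $t$-slices, imitating the last paragraph of the proof of Theorem~\ref{thm:finite}. Fix $n\ge n_0$ and $t_0\in\mathbb{R}$ and put $N_n(s)=N_n\cap(\{s\}\times X)$. Since $\pi_n$ is a skew product flow, $N_n$ is positive $\pi_n$-invariant, and $\pi_n$ does not explode in $\mathbb{R}\times N$ (strong $\pi_n$-semi-admissibility), the sets $N_n(t_0-\tau)\pi_n\tau$ are defined, nonempty (as $N_n(t_0-\tau)\supset\{t_0-\tau\}\times V$), and decrease in $\tau\ge0$ to subsets of $N_n(t_0)$. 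The key point is that single-flow $\pi_n$-semi-admissibility of $\mathbb{R}\times N$ is applicable here, because the relevant base sequence $s_m+t_m=(t_0-\tau_m)+\tau_m\equiv t_0$ is trivially precompact; applying it to the orbit segments $\zeta_k\pi_n[0,k-m]$ (one extraction for each fixed $m$) and diagonalising shows, as in Lemma~\ref{lem:Gamma_empty} but now for the single flow $\pi_n$ and using only continuity of $\pi_n$, that $\bar A_n(t_0):=\bigcap_{\tau\ge0}\operatorname{cl}(N_n(t_0-\tau)\pi_n\tau)$ is nonempty, compact, and equal to the $t_0$-slice of $K_n$ ($\supset$ from full left solutions of points of $K_n$ together with positive invariance, $\subset$ from the diagonal construction of a full left $\pi_n$-solution staying in $N_n$). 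In particular $K_n\ne\varnothing$.

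For the upper semicontinuity I would argue by contradiction: if it failed there would be $\eta>0$, a subsequence $\pi_{n_m}$, and points $y_m=(r_m,w_m)\in K_{n_m}$ with $d(w_m,K)\ge\eta$. Since $\pi_{n_m}$ is a flow, $z_m:=y_m\pi_{n_m}(-m)\in\mathbb{R}\times\tilde N$ and $z_m\pi_{n_m}[0,m]\subset\mathbb{R}\times\tilde N$; the strong $\{\pi_{n_m}\}$-semi-admissibility assumed in Theorem~\ref{thm:cont} (with $t_m=m\to\infty$, and with \emph{no} precompactness restriction on the base component) gives a subsequence along which $w_m=P_2(z_m\pi_{n_m}m)\to w_\infty$. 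Extracting further so that $P_2(z_m\pi_{n_m}(m-k))$ converges for every $k\in\mathbb{N}$, and using $\pi_{n_m}\overset{\mbox{\tiny ssing}}{\longrightarrow}\pi_0$ together with closedness of $\tilde N$, the limit points splice into a full left $\pi_0$-solution through $w_\infty$ lying in $\tilde N$ --- this is exactly the claim proved inside Lemma~\ref{lem:Gamma_empty}. As $\tilde N$ is positive $\pi_0$-invariant this forces $w_\infty\in A^-_{\pi_0}(\tilde N)\cap A^+_{\pi_0}(\tilde N)=K$, contradicting $d(w_m,K)\ge\eta$; hence $\sup_{y\in K_n}\inf_{x\in K}d(P_2y,x)\to0$.

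Finally, the uniform neighborhood is a short consequence of the previous two points. Strong $\pi_0$-admissibility of $\tilde N$ forces $K$ to be compact, and $K\subset V$ with $V$ open, so there is $\delta=\delta(\tilde N,\pi_0)>0$ with $U_{2\delta}(K)\subset V$ in $X$. The upper semicontinuity provides $n_1\ge n_0$ with $P_2(K_n)\subset U_\delta(K)$ for $n\ge n_1$; then every $y=(s,x)\in U_\delta(K_n)$ satisfies $d(x,K)<2\delta$, so $x\in V$ and $y\in\mathbb{R}\times V\subset N_n$, i.e.\ $U_\delta(K_n)\subset N_n$. With $N_n$ positive $\pi_n$-invariant this exhibits $K_n$ as a past attractor in the sense of Rasmussen \cite{Rasmussen2007}. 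I expect the two diagonal-compactness arguments --- the nonemptiness of $K_n$ in the second step and the splicing of limit points into a genuine $\pi_0$-solution in the third --- to be the main obstacle, since this is exactly where local compactness is missing; each must be set up so that the admissibility statement actually available is the one invoked (single-flow semi-admissibility, whose hypothesis needs precompact $\{s_m+t_m\}$, for the first; sequential $\{\pi_{n_m}\}$-semi-admissibility, which carries no such requirement, for the second).
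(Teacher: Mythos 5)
Your proposal is essentially correct, and the first half runs parallel to the paper: the paper's own proof also exploits that in the stable case the construction of Theorem~\ref{thm:cont} yields positively $\pi_n$-invariant sets $N_n$ (equivalently empty exit sets), and proves $K_n\ne\varnothing$ with compact $t$-slices by flowing points $(s_k,x_k)\in N_n$, $s_k\to-\infty$, forward to a fixed time-slice and invoking exactly the single-flow $\pi_n$-semi-admissibility you use (the base times $s_k+t_k$ are constant, so precompactness is automatic); your diagonal construction of the slice $\bar A_n(t_0)$ is just a more explicit version of this. Where you genuinely diverge is in the last two claims. For upper semicontinuity the paper simply cites the standard continuation result ({\cite[Corollary 4.11]{Carbinatto2002}}), while you prove it directly via the $\{\pi_{n_m}\}$-ss-admissibility plus the splicing argument of Lemma~\ref{lem:Gamma_empty}; this is exactly what the remark after that lemma anticipates, and it makes the corollary self-contained. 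For $U_\delta(K_n)\subset N_n$ the paper does \emph{not} use upper semicontinuity: it exploits the Lyapunov-type function $g^-$, the estimate $\operatorname{dist}(x,K)\le g^-(x)$ and compactness of $K$ to get $U_\delta(B_\delta)\subset B_\epsilon$ on the level sets, then builds a third nested family $N_n''\subset N_0''\subset N_n'\subset N_0'\subset N_n$ and concludes $U_\delta(K_n)\subset U_\delta(N_0'')\subset N_0'\subset N_n$; you instead combine compactness of $K$, the fixed tube $\mathbb{R}\times V(\delta)\subset N_n'$ from the theorem's construction, and the already-proved upper semicontinuity. Both give $\delta=\delta(\tilde N,\pi_0)$; the paper's route is independent of the semicontinuity statement, yours is shorter but needs your step three first. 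Two small points to tidy up: $L_0=\varnothing$ does not follow from the \emph{statement} of Theorem~\ref{thm:cont} alone (an index pair may have a superfluous nonempty $L$), only from its construction in the stable case, which is what you in fact use; and in step three you should not flow $y_m$ backwards by $\pi_{n_m}$ (in the intended applications these are only semiflows despite the wording "skew product flows") but rather take $z_m=\sigma_m(-m)$ for a full left solution $\sigma_m$ through $y_m$ inside the invariant set $K_{n_m}$ --- the rest of your argument, which only evolves $z_m$ forward, is unaffected.
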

\begin{rem*}
In \cite{Caraballo2003} an upper-semicontinuity is proved by assuming
that a global attractor exists. Our result shows that the same is
true for local attractors. We can prove without further assumptions
on the system that such local attractors always exist if the perturbation
is {}``uniformly small''. If $K$ is the global attractor of $\pi_{0}$
and the assumption (h2) of \cite{Caraballo2003} holds then $K_{n}$
must be the global attractor for large $n$.\end{rem*}
\begin{proof}
If $\tilde{N}$ is stable then w.l.o.g. we can replace $\tilde{N}$
by a stable isolating block defined via the function $g^{-}$ (see
below). The previous theorem implies that $N_{n}$ is a stable isolating
neighborhood for $\pi_{n}$ and $n\ge n_{0}$, i.e. $N_{n}=A_{\pi_{n}}^{+}(N_{n})$.
Because $N_{0}^{'}=\mathbb{R}\times N'\subset N_{n}$ we have \[
N_{n}\cap\mathbb{R}\times X\ne\varnothing.\]
Thus there is a sequence $s_{k}\to-\infty$ and $(x_{k})_{k\in\mathbb{N}}$
such that $(s_{k},x_{k})\in N_{n}$. Because $N_{n}$ is positive
$\pi_{n}$-invariant and $N_{0}$ is strongly $\pi_{n}$-semi-admissible
the sequence \[
(s_{k},x_{k})\pi_{n}(-s_{k})\in N_{n}\cap\{0\}\times X\]
has a convergent subsequence and each limit point is in $A_{n}^{-}(N_{n})$
and thus in $A_{\pi_{n}}(N_{n})\subset\operatorname{int}N_{n}^{'}$.
By the same argument we can show that each $t$-slice of $N_{n}$
contains a non-empty compact $t$-slice of the invariant set.

The upper-semicontinuity is a standard result from the index continuation
(see e.g. {\cite[Corollary 4.11]{Carbinatto2002}}).

It remains to show that there is a $\delta>0$ such that for large
$n$\[
U_{\delta}(K_{n})\subset N_{n}.\]
Recall the definition of the function $g^{-}:\tilde{N}\to\mathbb{R}^{\ge0}$
(adjusted to the stable case)\[
g^{-}(x):=\sup\{\alpha(t)F(x\pi t)\,|\,0\le t<\infty\}\]
with \[
F(x):=\min\{1,\operatorname{dist}(x,K)\}\]
and some strictly increasing $C^{\infty}$-diffeomorphism $\alpha:[0,\infty)\to[1,2)$.
$g^{-}$ is continuous on $\tilde{N}$ and strictly decreasing along
$\pi_{0}$ outside of $K$. For sufficiently small $0<\epsilon<\frac{1}{2}$
\[
B_{\epsilon}:=(g^{-})^{-1}([0,\epsilon])\subset\operatorname{int}\tilde{N}\]
defines a stable isolating block and $B_{\delta}\subset B_{\epsilon}$
for $\delta<\epsilon$. Since $g^{-}(x)<1$ for $x\in B$ we have
$\operatorname{dist}(x,K)\le g^{-}(x)$. Because $K$ is compact there
exists some $2\delta<\epsilon$ such that for all $x\in\partial B$
\[
2\delta<d(x,K)\le\epsilon.\]
This implies \[
U_{\delta}(B_{\delta})\subset B_{\epsilon}.\]
The same applies for the suspension $\tilde{g}^{-}:\mathbb{R}\times\tilde{N}\to\mathbb{R}^{\ge0}$.
Our proof of the previous theorem constructs the sets $N_{0}^{'}$
and $N_{0}$ from the function $g^{-}$ (resp. $\tilde{g}^{-}$ in
our adapted version), i.e. \begin{eqnarray*}
N_{0}^{'} & = & (\tilde{g}^{-})^{-1}([0,\epsilon])\\
 & = & \mathbb{R}\times(g^{-})^{-1}([0,\epsilon])\\
 & = & \mathbb{R}\times B_{\epsilon}\end{eqnarray*}
 for some $\epsilon>0$. By the previous argument there is a $\delta>0$
such that for $N_{0}^{''}:=(\tilde{g}^{-})^{-1}([0,\delta])$ we have
\[
U_{\delta}(N_{0}^{''})\subset N_{0}^{'}.\]
 Similar arguments as used in the proof above show that there is a
stable isolating neighborhood $N_{n}^{''}$ of $K_{n}=A_{\pi_{n}}(\mathbb{R}\times\tilde{N})$
with \[
N_{n}^{''}\subset N_{0}^{''}\subset N_{n}^{'}\subset N_{0}^{'}\subset N_{n}\subset N_{0}.\]
Now the inclusion sequence implies \[
U_{\delta}(K_{n})\subset U_{\delta}(N_{n}^{''})\subset N_{n}.\]

\end{proof}

\subsection{Retarded functional differential equations}

Let $C=C([-r,0],\mathbb{R}^{m})$ be the space of continuous functions
equipped with the $\sup$-norm, $r\ge0$ and $\Omega\subset C$ be
an open set. For a continuous map $x:[-r+t,t]\to\mathbb{R}^{m}$ and
$t\in\mathbb{R}$ we write $x_{t}$ as the element in $C$ such that
$x_{t}(\theta)=x(t+\theta)$ for $\theta\in[-r,0].$ 

If $f_{0}:\Omega\to\mathbb{R}^{m}$ is Lipschitz continuous then the
following (autonomous) retarded functional differential equation (RFDE)
\begin{eqnarray*}
\dot{x}(t) & = & f_{0}(x_{t})\end{eqnarray*}
induces a semiflow $\pi_{0}$ such that each bounded and closed $N\subset\Omega$
for which $f_{0}(N)$ is bounded is strongly $\pi_{0}$ admissible
(see \cite{Hale1993,Rybakowski1987}).
\begin{rem*}
An RFDE with $r=0$ is an ODE on $\Omega\subset\mathbb{R}^{m}$.
\end{rem*}
Similarly if $f_{n}:\mathbb{R}\times\Omega\to\mathbb{R}^{m}$ is Lipschitz
continuous then the non-autonomous RFDEs \[
\dot{x}(t)=f_{n}(t,x_{t})\]
induce (skew product) semiflows $\pi_{n}$ on $\mathbb{R}\times\Omega$.
Furthermore, $\pi_{n}$ does not blow up in $\mathbb{R}\times N$
if $N\subset\Omega$ is closed and bounded and $f_{n}(t,N)$ is bounded
uniformly in $t$. One can even show that $\mathbb{R}\times N$ is
strongly $\pi_{n}$-semi-admissible (see proof of {\cite[4.2]{Rybakowski1987}}). 

If we assume that \[
\sup_{t\in\mathbb{R}}\|f_{n}(t,\cdot)-f_{0}\|<\epsilon_{n}\to0\]
then we can easily show that $\pi_{n}\overset{\tiny\mbox{ssing}}{\longrightarrow}\pi_{0}$.
$\mathbb{R}\times N$ is strongly $\{\pi_{n}\}$-semi-singular-admissible.

\subsection{Semilinear parabolic equations}

The idea for semilinear parabolic equations is very similar. Suppose
$A$ is a sectorial operator and $f_{0}:X^{\alpha}\to X$ and $\Omega$
are {}``nice'' then \begin{eqnarray*}
u_{t} & = & Au+f_{0}(u)\\
u|_{\partial\Omega} & = & 0\end{eqnarray*}
induces a local semiflow $\pi_{0}$ on some $X^{\alpha}$. Furthermore,
bounded closed set $N\subset X^{\alpha}$ with $f_{0}(N)$ bounded
are strongly $\pi_{0}$-admissible.

As above the non-autonomous equation \begin{eqnarray*}
u_{t} & = & Au+f_{n}(t,u)\\
u|_{\partial\Omega} & = & 0\end{eqnarray*}
induces a skew product semiflow on $\mathbb{R}\times X^{\alpha}$
and $\mathbb{R}\times N$ is strongly $\pi_{n}$-semi-admissible for
a bounded closed $N\subset X^{\alpha}$ with $f_{n}(t,N)$ is bounded
uniformly in $t$. If, furthermore, \[
\sup_{t\in\mathbb{R}}\|f_{n}(t,\cdot)-f\|<\epsilon_{n}\to0\]
then $\pi_{n}\overset{\tiny\mbox{ssing}}{\longrightarrow}\pi_{0}$
and $\mathbb{R}\times N$ is strongly $\{\pi_{n}\}$-semi-singular-admissible.

\bibliographystyle{amsalpha}
\bibliography{ref}

\end{document}